\documentclass[12pt,reqno]{amsart}

\usepackage{times}
\usepackage{graphicx}
\usepackage{xcolor}
\graphicspath{ {./images/} }

\usepackage{amsmath,amsthm,amsfonts,amscd,amssymb}
\usepackage{pst-node}
\usepackage{pst-plot}
\usepackage[all]{xy}
\usepackage{stackengine}

\usepackage{tikz, braids}
\usetikzlibrary{decorations.pathreplacing, arrows}

\usepackage[
    a4paper,
    left=22mm,
    right=22mm,
    top=25mm,
    bottom=25mm
]{geometry}

\usepackage{setspace}

\newtheorem{theorem}{Theorem}[section]
\newtheorem{corollary}[theorem]{Corollary}

\newtheorem{lemma}[theorem]{Lemma}

\newtheorem{proposition}[theorem]{Proposition}
\newtheorem{example}[theorem]{Example}

\newtheorem*{maintheorem1}{Main Result 1}
\newtheorem*{maintheorem2}{Main Result 2}
\newtheorem*{maintheorem3}{Main Result 3}

\numberwithin{theorem}{section}
\numberwithin{equation}{section}

\newcommand{\myop}[1]{\operatorname{#1}}

\newcommand{\id}{\myop{id}}

\newcommand{\norm}[1]{\left\Vert#1\right\Vert}

\newcommand{\la}{\langle}
\newcommand{\ra}{\rangle}

\newcommand{\Comp}{\mathbb{C}}

\newcommand{\g}{\mathbb{G}}

\newcommand{\n}{\mathbb{N}}

\newcommand{\z}{\mathbb{Z}}

\global\long\def\tp{\mathop{\xymatrix{*+<.7ex>[o][F-]{\scriptstyle \top}}
 } }

\begin{document}

\title[Non-unitarizable representations of compact and discrete quantum groups]{Non-unitarizable representations of compact and discrete quantum groups}

\author[Fatemeh Khosravi]{Fatemeh Khosravi} 
\address{Fatemeh Khosravi, Department of Pure Mathematics, Faculty of Mathematics and Statistics, University of Isfahan, Isfahan 81746-73441, Iran} \email{f.khosravi@mcs.ui.ac.ir}

\author[Sang-Gyun Youn]{Sang-Gyun Youn}
\address{Sang-Gyun Youn, 
Department of Mathematics Education and NextQuantum Innovation Research Center, Seoul National University, 
Gwanak-Ro 1, Gwanak-Gu, Seoul 08826, Republic of Korea}
\email{s.youn@snu.ac.kr }

\begin{abstract}

We study the similarity problem for representations of compact and discrete quantum groups. We first prove that every non-degenerate contractive representation of a compact or discrete quantum group is automatically unitary. Then, for compact quantum groups, we develop an interpolation method for constructing explicit non-unitarizable non-degenerate representations with norms arbitrarily close to $1$. In particular, this applies to all non-Kac compact quantum groups whose dual has subexponential growth, including the Drinfeld--Jimbo quantum groups $G_q$, as well as to the non-Kac free unitary quantum groups $U_F^+$ with $F\in\operatorname{GL}_2(\Comp)$. On the discrete quantum group side, we establish a quantum analogue of the classical lifting principle for non-unitarizable uniformly bounded representations. Combining this lifting principle with recent results on maximal Kac quantum subgroups, we construct explicit non-unitarizable non-degenerate representations with norms arbitrarily close to $1$ for all unitary free quantum groups $\mathbb FU_F$ and all non-amenable orthogonal free quantum groups $\mathbb FO_F$.

\end{abstract}

\maketitle


\section{Introduction}\label{intro}

The similarity problem for representations of locally compact groups has a long history. Let $G$ be a locally compact group. A strongly continuous homomorphism $\pi:G\longrightarrow \operatorname{GL}(H_{\pi})$ is called a \textit{representation}. It is said to be \textit{uniformly bounded} if $\displaystyle \norm{\pi}= \sup_{x\in G}\norm{\pi(x)}_{B(H_{\pi})}<\infty$, and is called a \textit{unitary representation} if $\pi(x)$ is unitary for all $x\in G$. A representation $\pi$ is called
\textit{unitarizable} if there exists an invertible operator $T\in B(H_{\pi})$ such that
\begin{equation}
\pi_T(x)=T\pi(x)T^{-1}, \qquad x\in G,
\end{equation}
defines a unitary representation.

\setstretch{1.1}

A classical result of Day \cite{Da50} and Dixmier \cite{Di50} states that every uniformly bounded representation of an amenable locally compact group is unitarizable. The famous {\it Dixmier similarity problem} asks whether the converse holds: does every non-amenable locally compact group admit a non-unitarizable uniformly bounded representation?

\setstretch{1.1}

Although this problem remains open in general, considerable effort has been devoted to constructing explicit non-unitarizable uniformly bounded representations. Important contributions include works on Lie groups
\cite{EhMa55,KuSt60,KuSt61,KuSt67,KuSt73,Lip67,Lip69,Lip69b}, free groups and free products \cite{MaZa83,MaZa83b,PySz86,BoFe84,Boz87,Szw89,Szw90,Wys93}, and several other classes of groups \cite{Sal68,Sal67,EpMo09,Osi09,MoOz10}. We also refer to \cite{Cow77,Cow82,Pis01,Pis05} for further background on uniformly bounded representations and similarity problems.

\setstretch{1.1}

The similarity problem has also been studied in the framework of {\it locally compact quantum groups} \cite{BrSa10,ChSa13,BrDaSa13,LeSaSp16,LSS19}. A representation of a locally compact quantum group $\g=(L^{\infty}(\g),\Delta,\varphi,\psi)$ on a Hilbert space $H_V$ is an element $V\in L^{\infty}(\g)\overline{\otimes}B(H_V)$ satisfying $(\Delta\otimes\mathrm{id})(V)=V_{13}V_{23}$, where we use the leg notation. A representation $V$ is called \textit{contractive} if
\begin{equation}
    \norm{V}_{L^{\infty}(\g)\overline{\otimes}B(H_V)}\leq 1,
\end{equation}
and is called \textit{unitarizable} if there exists an invertible operator $T\in B(H_V)$ such that
\begin{equation}
    V_T=(1\otimes T)V(1\otimes T^{-1})
\end{equation}
is unitary in $L^{\infty}(\g)\overline{\otimes}B(H_V)$.

\setstretch{1.1}

Following \cite{BrYo19}, the similarity problem for locally compact quantum groups naturally splits into the following two questions:
\begin{enumerate}
    \item Is every non-degenerate contractive representation of $\g$
    automatically unitary?
    \item Is every non-degenerate representation of $\g$ unitarizable?
\end{enumerate}

Question (1) is already nontrivial in the cocommutative case $\g=\widehat{G}$. In this setting, the question was raised in
\cite{EfRu03,BrSa10}, and it is known that every non-degenerate contractive representation of $\widehat{G}$ is unitary when the connected component of the identity $G_e$ is amenable; see \cite{LeSaSp16}. The same conclusion follows for compact quantum groups of Kac type from the proof of \cite[Theorem 6.2]{BrDaSa13}, and for amenable discrete quantum groups of Kac type from \cite[Section 4.1]{BrYo19}. Beyond these cases, Question (1) had remained open for non-Kac compact quantum groups and for non-amenable discrete quantum groups.

\setstretch{1.1}

Our first main result answers Question (1) affirmatively for all compact and discrete quantum groups. The proofs are direct and use standard tools from locally compact quantum group theory.

\begin{maintheorem1}
Let $\g$ be an arbitrary compact quantum group or discrete quantum group. If a non-degenerate representation $V\in L^{\infty}(\g)\overline{\otimes}B(H_V)$ satisfies $\norm{V}_{L^{\infty}(\g)\overline{\otimes}B(H_V)} \leq 1$, then $V$ is unitary.
\end{maintheorem1}

This result shows that Question (1) always has an affirmative answer for compact and discrete quantum groups. Thus, within these classes, Question (1) no longer imposes any restriction, and the similarity problem reduces to Question (2). It was already known that Question (2) has an affirmative answer for compact quantum groups of Kac type and for amenable discrete quantum groups of Kac type; see \cite[Theorem 6.2]{BrDaSa13} and \cite[Section 4.1]{BrYo19}:
\begin{gather} 
\text{\normalfont\normalsize $\g$ is a compact Kac quantum group or an amenable discrete Kac quantum group} \notag\\[3pt] \Longrightarrow ~\text{\normalfont\normalsize Question (2) has an affirmative answer.} \label{eq-mainquestion} 
\end{gather}

In the spirit of the Dixmier similarity problem, we study \eqref{eq-mainquestion} in the converse direction from a constructive point of view. More precisely, we consider the following question:
\begin{enumerate}
    \item[(2)'] If $\g$ is a non-Kac compact quantum group or a
    non-amenable discrete quantum group, can one explicitly construct a non-unitarizable non-degenerate representation of $\g$?
\end{enumerate}

Since this question subsumes the Dixmier similarity problem for non-amenable discrete groups, we do not aim for a complete solution. For non-Kac compact quantum groups, the principal general result in this direction is \cite[Theorem 3.1]{BrYo19}. It shows that if $\g$ is non-Kac and its dual has subexponential growth, as happens for the Drinfeld--Jimbo quantum groups $G_q$, then at least one of Questions (1) and (2) must have a negative answer. In view of {\bf Main Result 1}, it follows that Question (2) necessarily fails for every compact quantum group in this class.

\setstretch{1.1}

The argument of \cite[Theorem 3.1]{BrYo19}, however, establishes only the existence of a counterexample. It relies on the general methods developed in \cite{Pi98,Sp02} and does not specify how such a representation may be chosen. An explicit non-unitarizable non-degenerate representation is constructed there for a specially designed compact quantum group $\g=\prod_{n=1}^{\infty}SU_{q_n}(2)$ with $q_n\in(0,1)$ such that $\displaystyle \lim_{n\rightarrow\infty}q_n=0$. For individual examples such as $\g=SU_q(2)$, their arguments show the existence but do not provide an explicit non-unitarizable non-degenerate representation.

\setstretch{1.1}
Our second main result establishes a general criterion for constructing explicit non-unitarizable non-degenerate representations of compact quantum groups with norms arbitrarily close to $1$. The construction is based on interpolating between the contragredient representation $(u^{\alpha})^c$ and its unitarized form $(1\otimes Q_{\alpha}^{\frac{1}{2}})(u^{\alpha})^c(1\otimes Q_{\alpha}^{-\frac{1}{2}})$. More precisely, for a subset $E\subseteq\operatorname{Irr}(\g)$ and a function $\theta:E\rightarrow[0,1]$, we consider the direct sum of the intermediate representations
\begin{equation}
v_{E,\theta}=\bigoplus_{\alpha\in E}(1\otimes Q_{\alpha}^{\frac{\theta(\alpha)}{2}})(u^{\alpha})^c(1\otimes Q_{\alpha}^{-\frac{\theta(\alpha)}{2}}).
\end{equation}
We show that $\displaystyle \norm{v_{E,\theta}} \leq \sup_{\alpha\in E}n_{\alpha}^{1-\theta(\alpha)}$ and, moreover, that $v_{E,\theta}$ is unitarizable if and only if
\begin{equation}
\sup_{\alpha\in E}\left(\norm{Q_{\alpha}}_{op}\norm{Q_{\alpha}^{-1}}_{op}\right)^{\frac{1-\theta(\alpha)}{2}}<\infty.
\end{equation}
By choosing suitable parameters $\theta(\alpha)$, we establish the following criterion.

\begin{maintheorem2}
Let $\g$ be a compact quantum group. Suppose that there exists a subset $E\subseteq\operatorname{Irr}(\g)$ such that
\begin{equation}
\sup_{\alpha\in E}\frac{\log\left(\norm{Q_{\alpha}}_{op}\norm{Q_{\alpha}^{-1}}_{op}\right)}{\log(1+n_{\alpha})}=\infty.
\end{equation}
Then, for any $\epsilon>0$, one can construct a non-unitarizable non-degenerate representation $V\in L^{\infty}(\g)\overline{\otimes}B(H_V)$ satisfying $\norm{V}_{L^{\infty}(\g)\overline{\otimes}B(H_V)}<1+\epsilon$. In particular, this conclusion holds if either
\begin{enumerate}
\item[(A)] $\g$ is non-Kac and its dual $\widehat{\g}$ has subexponential growth; or
\item[(B)] $\g$ is a non-Kac free unitary quantum group $U_F^+$ for some $F\in\operatorname{GL}_2(\Comp)$.
\end{enumerate}
\end{maintheorem2}

\setstretch{1.1}

We next turn our attention to discrete quantum groups. We first establish a quantum analogue of the classical lifting principle for uniformly bounded representations along quotient homomorphisms. More precisely, if $\widehat{\mathbb H}$ is a closed quantum subgroup of $\widehat{\mathbb G}$ in the sense of Vaes, then every representation of $\mathbb H$ can be lifted to a representation of $\mathbb G$ without changing either its norm or its unitarizability. This provides a general mechanism for transferring non-unitarizable non-degenerate representations to larger locally compact quantum groups.

We apply this lifting principle to two fundamental families of non-amenable discrete quantum groups, namely the unitary free quantum groups $\mathbb FU_F$, with $F\in\operatorname{GL}_N(\Comp)$ and $N\geq 2$, and the orthogonal free quantum groups $\mathbb FO_F$, with $F\in\operatorname{GL}_N(\Comp)$, $N\geq 3$. A key ingredient is the description of {\it maximal Kac quantum subgroups} for free quantum groups; see, for example, \cite{DaFrSk21}. Combining this description with uniformly bounded non-unitarizable representations arising from free groups, we establish the following theorem. This result is new even in the Kac case, namely for $\mathbb F U_N$ and $\mathbb F O_N$.

\begin{maintheorem3}
Let $\epsilon>0$. The following assertions hold.
\begin{enumerate}
\item Let $N\geq 2$ and $F\in\operatorname{GL}_N(\Comp)$. The unitary free quantum group $\mathbb FU_F$ admits a non-unitarizable non-degenerate representation of norm less than $1+\epsilon$.
\item Let $N\geq 3$ and $F\in\operatorname{GL}_N(\Comp)$ such that $F\overline{F}\in\mathbb{R}\cdot \operatorname{Id}_N$. The orthogonal free quantum group $\mathbb FO_F$ admits a non-unitarizable non-degenerate representation of norm less than $1+\epsilon$.
\end{enumerate}
\end{maintheorem3}

\section{Preliminaries}\label{pre}
\subsection{Locally compact quantum groups and their representations}

We begin by recalling the von Neumann algebraic formulation of locally compact quantum groups; see \cite{KuVa00,KuVa03}. A \textit{ locally compact quantum group} $\g= \left( L^{\infty}(\g),\Delta,\varphi,\psi \right)$ consists of a von Neumann algebra $L^{\infty}(\g)$, a normal unital $*$-homomorphism $\Delta:L^{\infty}(\g)\rightarrow L^{\infty}(\g) \overline{\otimes} L^{\infty}(\g)$ satisfying 
\begin{equation}
    (\Delta\otimes\operatorname{id})\Delta=(\operatorname{id}\otimes\Delta)\Delta,
\end{equation}
and normal semifinite faithful weights $\varphi,\psi$ satisfying
\begin{align}
    \varphi\left( (\omega\otimes\operatorname{id})\Delta(x)\right) &=\omega(1)\varphi(x),\\
    \psi\left((\operatorname{id}\otimes\omega)\Delta(y)\right)&=\omega(1)\psi(y),
\end{align}
for all $\omega\in L^{1}(\g)_{+}$, $x\in\mathfrak{M}_{\varphi}^{+}$, and $y\in\mathfrak{M}_{\psi}^{+}$. Here, $L^1(\g)$ is the predual of $L^{\infty}(\g)$, i.e. $L^1(\g)=L^{\infty}(\g)_*$ and $\mathfrak{M}_{\varphi}^{+}=\left\{x\in L^{\infty}(\g)_{+}:\varphi(x)<\infty \right\}$
and $\mathfrak{M}_{\psi}^{+}=\left\{y\in L^{\infty}(\g)_{+}:\psi(y)<\infty \right\}$.

The predual $L^{1}(\g)$ is a completely contractive Banach algebra with respect to the
convolution product 
\begin{equation}
    (\omega\star\eta)(x)=(\omega\otimes\eta)(\Delta(x)),\qquad \omega,\eta\in L^{1}(\g), x\in L^{\infty}(\g).
\end{equation}

A \textit{representation} of $\g$ on a Hilbert space $H_{V}$ is an element $V\in L^{\infty}(\g)\overline{\otimes}B(H_{V})$ satisfying
\begin{equation}
    (\Delta\otimes\operatorname{id})(V)=V_{13}V_{23}.
\end{equation}
Here and throughout the paper, we use the standard leg-numbering notation. 

A representation $V$ is called \textit{non-degenerate} if 
\begin{equation}
    \overline{\operatorname{span}\left\{(\omega\otimes \operatorname{id})(V)\xi:\omega\in L^{1}(\g), \xi\in H_{V}\right\}}=H_{V}.
\end{equation}
In particular, any uniformly bounded representation of a locally compact group is non-degenerate.

A representation $V$ is called \textit{ contractive} if $\norm{V}_{L^{\infty}(\g)\overline{\otimes}B(H_{V})}\leq 1$, and is called \textit{ isometric} if $V^{*}V=1_{L^{\infty}(\g)}\otimes I_{H_{V}}$. It is called \textit{ unitary} if
\begin{equation}
    V^{*}V=VV^{*}=1_{L^{\infty}(\g)}\otimes I_{H_{V}}.
\end{equation}
A representation $V$ is called \textit{unitarizable} if there exists an invertible operator $T\in B(H_{V})$ such that
\begin{equation}
    V_{T}=(1\otimes T)V(1\otimes T^{-1})
\end{equation}
is unitary.

For a family of representations $V_i\in L^{\infty}(\g)\overline{\otimes}B(H_i)$ such that $\displaystyle \sup_{i\in I}\norm{V_i}<\infty$, their
\textit{direct sum representation} is
\begin{equation}
    \bigoplus_{i\in I}V_i =\sum_{i\in I}(1\otimes t_i)V_i(1\otimes t_i^*)\in L^{\infty}(\g)\overline{\otimes}B\left(\bigoplus_{i\in I}H_i\right),
\end{equation}
where $t_i:H_i\hookrightarrow \bigoplus_{i\in I}H_i$ is the canonical isometry and the sum converges in the strong operator topology.

For representations $V\in L^{\infty}(\g)\overline{\otimes}B(H_{V})$ and $W\in L^{\infty}(\g)\overline{\otimes}B(H_{W})$, their \textit{tensor product representation} is defined by
\begin{equation}
    V\tp W=V_{12}W_{13} \in L^{\infty}(\g)\overline{\otimes}B(H_{V}\otimes H_{W}).
\end{equation}
The space of \textit{intertwiners} from $V$ to $W$ is
\begin{align}
\operatorname{Mor}(V,W)=\left\{A\in B(H_{V},H_{W}):(1\otimes A)V=W(1\otimes A)\right\}.
\end{align}
The representations $V$ and $W$ are called \textit{equivalent} (resp. unitarily equivalent) if $\operatorname{Mor}(V,W)$ contains an invertible (resp. unitary) operator. A representation $V$ is called \textit{irreducible} if $\operatorname{Mor}(V)=\operatorname{Mor}(V,V)=\Comp \cdot \operatorname{Id}_{H_V}$.

\subsection{Compact quantum groups}
A locally compact quantum group $\g=\left( L^{\infty}(\g),\Delta,\varphi,\psi \right)$ is called \textit{compact} if $\varphi(1_{L^{\infty}(\g)})<\infty$. In this case, $\varphi$ and $\psi$ are finite and therefore extend uniquely to normal faithful positive linear functionals on $L^{\infty}(\g)$. After normalization, these functionals coincide, and the resulting state $h$ is called the \textit{Haar state}. For any $a\in L^{\infty}(\g)$, we have
\begin{align}
    (\operatorname{id}\otimes h)(\Delta(a))=h(a)1_{L^{\infty}(\g)}=(h\otimes \operatorname{id})(\Delta(a)).
\end{align}
A compact quantum group $\g$ is called \textit{ Kac type} if $h(ab)=h(ba)$ for all $a,b\in L^{\infty}(\g)$.
Let $\operatorname{Irr}(\g)$ denote the set of equivalence classes of
irreducible unitary representations of $\g$, and for each
$\alpha\in\operatorname{Irr}(\g)$ we take a representative irreducible unitary
representation
\begin{align}
    u^{\alpha}=\sum_{i,j=1}^{n_{\alpha}}u^{\alpha}_{ij}\otimes e^{\alpha}_{ij} \in L^{\infty}(\g)\overline{\otimes}B(H_{\alpha}),
\end{align}
where $n_{\alpha}=\dim(H_{\alpha})$. We denote by
\begin{align}
    \operatorname{Pol}(\g)=\operatorname{span}
    \left\{ u^{\alpha}_{ij}: \alpha\in\operatorname{Irr}(\g), \ 1\leq i,j\leq n_{\alpha} \right\}
\end{align}
the {\it polynomial algebra} of $\g$. 

For any finite-dimensional unitary representation $V\in L^{\infty}(\g)\overline{\otimes}B(H_V)$, there exist a subset $E\subseteq \operatorname{Irr}(\g)$ and multiplicities $(m_{\alpha})_{\alpha\in E}\subseteq \n$ and a unitary operator $\displaystyle U:H_V \rightarrow \bigoplus_{\alpha\in E}H_{\alpha}\otimes\Comp^{m_{\alpha}}$ such that
\begin{equation}
    (1\otimes U)V(1\otimes U^*)=\bigoplus_{\alpha\in E}m_{\alpha}u^{\alpha},
\end{equation}
where $m_{\alpha}u^{\alpha}$ is the direct sum of $m_{\alpha}$ copies of $u^{\alpha}$. In this case, we write $\displaystyle V\cong\bigoplus_{\alpha\in E}m_{\alpha}u^{\alpha}$.

The \textit{Woronowicz character} $f_1:\operatorname{Pol}(\g)\rightarrow\Comp$ is the unital algebra homomorphism determined by
\begin{equation}
    Q_u=(f_1\otimes\operatorname{id})(u)
\end{equation}
for every finite-dimensional unitary representation $u$, where $Q_u$ is the canonical positive invertible operator satisfying
\begin{align}
    &\operatorname{Tr}(AQ_u)=\operatorname{Tr}(AQ_u^{-1}), \qquad A\in\operatorname{Mor}(u,u),\\
    &(S^2\otimes\operatorname{id})(u)=(1\otimes Q_u)u(1\otimes Q_u^{-1}).
\end{align}
See \cite[Proposition 1.4.4, Definition 1.7.1, and Proposition 1.7.2]{NeTu13}. Moreover, these operators $Q_u$'s satisfy
\begin{align}
    Q_{u\oplus v}&=Q_u\oplus Q_v,\\
   \label{eq212} Q_{u {\tiny \tp} v}&=Q_u\otimes Q_v;
\end{align}
see \cite[Proposition 1.4.4 and Theorem 1.4.9]{NeTu13}.

For $\alpha\in\operatorname{Irr}(\g)$, we write $Q_{\alpha}=Q_{u^{\alpha}}$ and $d_{\alpha}=\operatorname{Tr}(Q_{\alpha})=\operatorname{Tr}(Q_{\alpha}^{-1})$. We call $d_{\alpha}$ the \textit{quantum dimension} of $\alpha$. By choosing a suitable orthonormal basis $(e_j^{\alpha})_{j=1}^{n_{\alpha}}$ of $H_{\alpha}$, we may assume that $Q_{\alpha}$ is diagonal. Then the Schur orthogonality relations are given by
\begin{align}
    h\left( (u^{\alpha}_{ij})^*u^{\beta}_{kl}\right)&=\frac{\delta_{\alpha\beta}\delta_{ik}\delta_{jl}(Q_{\alpha})_{ii}^{-1}}{d_{\alpha}},\\
    h\left(u^{\alpha}_{ij}(u^{\beta}_{kl})^*\right)&=\frac{\delta_{\alpha\beta}\delta_{ik}\delta_{jl}(Q_{\alpha})_{jj}}{d_{\alpha}},
\end{align}
for all $\alpha,\beta\in\operatorname{Irr}(\g)$,
$1\leq i,j\leq n_{\alpha}$, and
$1\leq k,l\leq n_{\beta}$. Moreover,
$Q_{\alpha}=I_{H_{\alpha}}$ for all
$\alpha\in\operatorname{Irr}(\g)$ if and only if $\g$ is of Kac type.

For a finite-dimensional representation $u=\sum_{i,j=1}^{n_u} u_{ij}\otimes e_{ij}\in L^{\infty}(\g)\overline{\otimes}B(H_u)$ with respect to a fixed orthonormal basis $(e_j)_{j=1}^{n_u}$ of $H_u$, the \textit{contragredient representation} is defined by
\begin{equation}
    u^c=\sum_{i,j=1}^{n_u}u_{ij}^*\otimes e_{ij}\in L^{\infty}(\g)\overline{\otimes}B(H_u).
\end{equation}
In particular, $(u^{\alpha})^c =\displaystyle \sum_{i,j=1}^{n_{\alpha}}(u^{\alpha}_{ij})^*\otimes e^{\alpha}_{ij}\in L^{\infty}(\g)\overline{\otimes}B(H_{\alpha})$
is not unitary in general, whereas
\begin{equation}
    \left(
        1\otimes Q_{\alpha}^{\frac{1}{2}}
    \right)
    (u^{\alpha})^c
    \left(
        1\otimes Q_{\alpha}^{-\frac{1}{2}}
    \right)
\end{equation}
is unitary.

For $\alpha\in\operatorname{Irr}(\g)$, the Schur orthogonality relations imply
\begin{align}
   \label{eq213} (h\otimes\operatorname{id})\left( [(u^{\alpha})^c]^*(u^{\alpha})^c \right)
    &=\frac{n_{\alpha}}{d_{\alpha}}Q_{\alpha},
    \\
    \label{eq214}(h\otimes\operatorname{id}) \left( (u^{\alpha})^c[(u^{\alpha})^c]^*\right)
    &=    \frac{n_{\alpha}}{d_{\alpha}}Q_{\alpha}^{-1}.
\end{align}
Consequently, for a finite-dimensional direct sum representation $\displaystyle u=\bigoplus_{\alpha\in\operatorname{Irr}(\g)} m_{\alpha}u^{\alpha}$, we have
\begin{align}
\label{eq210}(h\otimes\operatorname{id})\left([u^c]^*u^c\right)
&=\bigoplus_{\alpha\in\operatorname{Irr}(\g)}\left(\frac{n_{\alpha}}{d_{\alpha}}Q_{\alpha}\right)^{\oplus m_{\alpha}},
\\
\label{eq211}(h\otimes\operatorname{id})\left(u^c[u^c]^*\right)
&=\bigoplus_{\alpha\in\operatorname{Irr}(\g)}\left(\frac{n_{\alpha}}{d_{\alpha}}Q_{\alpha}^{-1}\right)^{\oplus m_{\alpha}}.
\end{align}

Let $\overline{\alpha}\in\operatorname{Irr}(\g)$ such that $u^{\overline{\alpha}}$ is unitarily equivalent to $\left(1\otimes Q_{\alpha}^{\frac{1}{2}} \right) \left( u^{\alpha}\right)^{c} \left(1\otimes Q_{\alpha}^{-\frac{1}{2}} \right)$. Then there exists a unitary $U_{\alpha}\in B(H_{\alpha},H_{\overline{\alpha}})$ such that 
\begin{equation}
 u^{\overline{\alpha}}= \left (1\otimes U_{\alpha}Q_{\alpha}^{\frac{1}{2}}\right )\left (u^{\alpha}\right )^c \left (1\otimes Q_{\alpha}^{-\frac{1}{2}}U_{\alpha}^*\right ).
\end{equation}
For each $\alpha\in \operatorname{Irr}(\g)$, let us consider $R_{\alpha}:\Comp\rightarrow H_{\overline{\alpha}}\otimes H_{\alpha}$ given by
\begin{equation}\label{eq-intertwiner}
    R_{\alpha}(1)=\sum_{j=1}^{n_{\alpha}} U_{\alpha}Q_{\alpha}^{-\frac{1}{2}} e^{\alpha}_j\otimes e^{\alpha}_j
\end{equation}
where $(e^{\alpha}_j)_{j=1}^{n_{\alpha}}$ is an orthonormal basis of the representation space of $u^{\alpha}$. Then we have
\begin{equation}
    \operatorname{Mor}(1,u^{\overline{\alpha}}\tp u^{\alpha})=\Comp\cdot R_{\alpha}
\end{equation}
for all $\alpha\in \operatorname{Irr}(\g)$, where $1$ is the trivial representation of $\g$. This follows from the standard solution of the conjugate equations and
Frobenius reciprocity; see \cite[Section 2.2]{NeTu13}.

\subsection{Discrete quantum groups}\label{sec-discrete}

A locally compact quantum group $\g$ is called \textit{discrete} if its dual locally compact quantum group $\widehat{\g}$ is compact. Let us briefly recall the structure of a discrete quantum group $\g=\left(\ell^\infty(\g),\Delta,\varphi,\psi\right)$. The von Neumann algebra of $\g$ admits the decomposition
\begin{equation}\label{eq230}
\ell^\infty(\g)=\ell^{\infty}\text{-}\bigoplus_{\alpha\in\operatorname{Irr}(\widehat{\g})} B(H_\alpha).
\end{equation}
The left and the right Haar weights on $\g$ are $\varphi,\psi:\ell^{\infty}(\g)_+\rightarrow [0,\infty]$ given by
\begin{align}
    \varphi(X)&=\sum_{\alpha\in \operatorname{Irr}(\widehat{\g})}d_{\alpha}\operatorname{Tr}(X_{\alpha}Q_{\alpha})\\
    \psi(X) &= \sum_{\alpha\in\operatorname{Irr}(\widehat{\g})} d_\alpha \operatorname{Tr}(X_\alpha Q_\alpha^{-1})
\end{align}
for all $X=(X_{\alpha})_{\alpha\in \operatorname{Irr}(\widehat{\g})}\in \ell^{\infty}(\g)_+$. These are normal semifinite faithful weights on $\ell^\infty(\g)$. In particular, $\varphi=\psi$ if and only if $Q_\alpha=\operatorname{Id}_{H_\alpha}$ for all $\alpha\in\operatorname{Irr}(\widehat{\g})$, or equivalently, if and only if the compact dual $\widehat{\g}$ is of Kac type. In this case, $\g$ is said to be of Kac type.

For each $\alpha\in\operatorname{Irr}(\widehat{\g})$, let $\rho_{\alpha}:\ell^{\infty}(\g)\rightarrow B(H_{\alpha})$ be the
canonical coordinate map. As explained in \cite[Definition 1.6.8 and pp. 28--29]{NeTu13}, the comultiplication on $\ell^{\infty}(\g)$ can be described explicitly in terms of the representation category of the compact dual $\widehat{\g}$. For fixed $\alpha,\beta\in\operatorname{Irr}(\widehat{\g})$, let $m_{\alpha,\beta}^{\gamma}$ denote the multiplicity of $u^{\gamma}$ in $u^{\alpha}\tp u^{\beta}$, and choose isometries $T_{\gamma,r}^{\alpha,\beta}\in \operatorname{Mor}(u^{\gamma},u^{\alpha}\tp u^{\beta})$, $1\leq r\leq m_{\alpha,\beta}^{\gamma}$, whose ranges are mutually orthogonal and satisfy
\begin{equation}
    \sum_{\gamma\in\operatorname{Irr}(\widehat{\g})}\sum_{r=1}^{m_{\alpha,\beta}^{\gamma}}
    T_{\gamma,r}^{\alpha,\beta}\left(T_{\gamma,r}^{\alpha,\beta}\right)^*
    =\operatorname{Id}_{H_{\alpha}}\otimes \operatorname{Id}_{H_{\beta}}.
\end{equation}
Then the $(\alpha,\beta)$-block of the comultiplication is given by
\begin{equation}
    (\rho_{\alpha}\otimes\rho_{\beta})(\Delta(X)) = \sum_{\gamma\in\operatorname{Irr}(\widehat{\g})} \sum_{r=1}^{m_{\alpha,\beta}^{\gamma}} T_{\gamma,r}^{\alpha,\beta} \rho_{\gamma}(X) \left(T_{\gamma,r}^{\alpha,\beta}\right)^*, \qquad X\in\ell^{\infty}(\g).
\end{equation}
For each fixed pair $(\alpha,\beta)$, the sum above is finite. Equivalently, for any $T\in\operatorname{Mor}(u^{\gamma},u^{\alpha}\tp u^{\beta})$,
\begin{equation}\label{eq200}
    (\rho_{\alpha}\otimes\rho_{\beta})(\Delta(X))T=T\rho_{\gamma}(X), \qquad X\in\ell^{\infty}(\g).
\end{equation}

Let $\epsilon\in\operatorname{Irr}(\widehat{\g})$ denote the equivalence class
of the trivial representation. Since $H_\epsilon=\Comp$, we identify
$B(H_\epsilon)$ with $\Comp$. The coordinate map $\rho_\epsilon\in \ell^1(\g)=\ell^\infty(\g)_*$ is a normal unital $*$-homomorphism, which we call the \textit{counit}. Then $\rho_{\epsilon}$ satisfies
\begin{equation}
    (\rho_{\epsilon}\otimes\operatorname{id})\Delta=\operatorname{id}=(\operatorname{id}\otimes\rho_{\epsilon})\Delta.
\end{equation}
Equivalently, $\rho_{\epsilon}$ is the unit in the convolution algebra $\ell^1(\g)$, i.e. $\rho_{\epsilon}\star\omega=\omega=\omega\star\rho_{\epsilon}$ for all $\omega\in \ell^1(\g)$. 

For a representation $V\in\ell^{\infty}(\g)\overline{\otimes}B(H_{V})$ and
$\alpha\in\operatorname{Irr}(\widehat{\g})$, we define the $\alpha$-block of $V$ by
\begin{equation}
    V^{\alpha}=(\rho_{\alpha}\otimes\operatorname{id})(V)\in B(H_{\alpha})\overline{\otimes}B(H_{V}).
\end{equation}
Under the decomposition \eqref{eq230}, the representation $V$ is identified with the bounded family
$(V^{\alpha})_{\alpha\in\operatorname{Irr}(\widehat{\g})}$, and $\displaystyle \norm{V}=\sup_{\alpha\in\operatorname{Irr}(\widehat{\g})}\norm{V^{\alpha}}$. Furthermore, \eqref{eq200} implies that for any $T\in\operatorname{Mor}(u^{\gamma},u^{\alpha}\tp u^{\beta})$,
\begin{align}
  \notag & (\rho_{\alpha}\otimes\rho_{\beta}\otimes \operatorname{id})(V_{13}V_{23})(T\otimes \operatorname{Id}_{H_V})\\
  \label{eq200.5} &=(\rho_{\alpha}\otimes\rho_{\beta}\otimes \operatorname{id})((\Delta\otimes \id)(V))(T\otimes \operatorname{Id}_{H_V})=(T\otimes \operatorname{Id}_{H_V})(\rho_{\gamma}\otimes \operatorname{id})(V)
\end{align}

\begin{lemma}\label{lem-intertwiner}
   Let $V\in \ell^{\infty}(\g)\overline{\otimes}B(H_V)$ be a non-degenerate representation of a discrete quantum group $\g$. Then, for any $\alpha\in \operatorname{Irr}(\widehat{\g})$ and $T\in \text{Mor}(u^{\epsilon},u^{\overline{\alpha}}\tp u^{\alpha})$, we have
\begin{equation}\label{eq202}
V^{\overline{\alpha}}_{13}V^\alpha_{23}(T\otimes 1_{H_V})=T\otimes 1_{H_V}.
\end{equation}
\end{lemma}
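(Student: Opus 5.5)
Apply \eqref{eq200.5} with $\gamma=\epsilon$, $\alpha$ replaced by $\overline{\alpha}$ and $\beta=\alpha$. For $T\in\operatorname{Mor}(u^{\epsilon},u^{\overline{\alpha}}\tp u^{\alpha})$ this gives
\begin{equation*}
V^{\overline{\alpha}}_{13}V^{\alpha}_{23}(T\otimes 1_{H_V})=(T\otimes 1_{H_V})\,V^{\epsilon},
\end{equation*}
where $V^{\epsilon}=(\rho_{\epsilon}\otimes\id)(V)\in B(H_V)$. It therefore suffices to prove $V^{\epsilon}=1_{H_V}$, and this is the only place where non-degeneracy is used.

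To show $V^{\epsilon}=1_{H_V}$, set $P=(\rho_{\epsilon}\otimes\id)(V)$. The map $\omega\mapsto\pi(\omega)=(\omega\otimes\id)(V)$ is a homomorphism $\ell^1(\g)\to B(H_V)$, because $(\Delta\otimes\id)(V)=V_{13}V_{23}$ gives $\pi(\omega\star\eta)=\pi(\omega)\pi(\eta)$. Since $\rho_{\epsilon}$ is the unit of $\ell^1(\g)$, we get $P\pi(\omega)=\pi(\rho_{\epsilon}\star\omega)=\pi(\omega)$ for every $\omega\in\ell^1(\g)$. Thus $P$ acts as the identity on every vector $\pi(\omega)\xi$. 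Non-degeneracy says these vectors span a dense subspace of $H_V$, and $P$ is bounded, so $P=1_{H_V}$.

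Substituting $V^{\epsilon}=1_{H_V}$ into the first display yields \eqref{eq202}. The only point needing care is the homomorphism identity $\pi(\omega\star\eta)=\pi(\omega)\pi(\eta)$, which is the routine computation $(\omega\otimes\eta\otimes\id)(V_{13}V_{23})=\pi(\omega)\pi(\eta)$ for normal functionals. The rest follows directly from \eqref{eq200.5} and the counit property.
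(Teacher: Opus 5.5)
Your proposal is correct and follows essentially the same route as the paper: you apply \eqref{eq200.5} with $\gamma=\epsilon$ to reduce the claim to $V^{\epsilon}=1_{H_V}$, and then use that $\rho_{\epsilon}$ is the unit of $\ell^1(\g)$, together with non-degeneracy, to force $V^{\epsilon}=1_{H_V}$. The only cosmetic differences are that you check the homomorphism property of $\omega\mapsto(\omega\otimes\id)(V)$ directly rather than citing \cite{BrDaSa13}, and you conclude from $P$ fixing a dense set, whereas the paper phrases this through $P$ being an idempotent whose range contains that dense set.
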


\begin{proof}

Note that, by \eqref{eq200.5}, we have
\begin{align}
V^{\overline{\alpha}}_{13}V^\alpha_{23}(T \otimes 1_{H_V})&=(\rho_{\overline{\alpha}}\otimes \rho_{\alpha}\otimes \operatorname{id})\left ((\Delta\otimes \operatorname{id}) (V)\right )(T\otimes 1_{H_V})\\
&=(T\otimes 1_{H_V}) (\rho_{\epsilon}\otimes \operatorname{id})(V) =(T\otimes 1_{H_V})V^\epsilon.
\end{align}

Thus, the remaining part is to show $V^{\epsilon}=\text{Id}_{H_V}$. By \cite{BrDaSa13}, a representation $V\in \ell^{\infty}(\g)\overline{\otimes}B(H_V)$ defines a completely bounded algebra homomorphism $\pi_V:\ell^1(\g)\rightarrow B(H_{V})$ given by $\pi_{V}(\omega) =(\omega\otimes\operatorname{id})(V)$ for all $\omega\in \ell^1(\g)$. Then $P=\pi_V(\rho_{\epsilon})=(\rho_{\epsilon}\otimes \operatorname{id})(V)=V^{\epsilon}$ satisfies $P^2=P$ in $B(H_V)$. Since $\pi_V(\omega)\xi=\pi_V(\rho_{\epsilon}\star \omega)\xi=P\cdot \pi_V(\omega)\xi$ we have
\begin{align}
    \left\{\pi_V(\omega)\xi: \omega\in \ell^1(\g),\xi\in H_V\right\}\subseteq \left\{P\xi':\xi'\in H_V\right\}=\text{ran}(P).
\end{align}
Thus, the non-degeneracy of $V$ implies $P=\pi_V(\rho_\epsilon)=V^{\epsilon}=\operatorname{Id}_{H_V}$.
    
\end{proof}

\section{The main results}
\subsection{Contractive representations are unitary} \label{sec:contractive-representations}

In this section, we show that all non-degenerate contractive representations of compact quantum groups and discrete quantum groups are automatically unitary. Let us begin with the case of compact quantum groups.

\begin{theorem}
\label{thm-main-1}
Let $\g=(L^\infty(\g),\Delta,h)$ be a compact quantum group and let $V\in L^\infty(\g)\overline{\otimes}B(H_V)$ be a non-degenerate representation satisfying $\norm{V}_{L^\infty(\g)\overline{\otimes}B(H_V)}\leq 1$. Then $V$ is unitary.
\end{theorem}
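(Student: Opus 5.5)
The plan is to reduce to finite-dimensional subrepresentations, on which $V$ is invertible and the Haar state can be used to produce invariant positive operators. Contractivity will then force these operators to be trivial.

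\emph{Step 1: reduction to finite-dimensional pieces.} Let $\pi_V(\omega)=(\omega\otimes\id)(V)$ be the associated completely bounded homomorphism of $L^1(\g)$, as in the proof of Lemma \ref{lem-intertwiner}. For $\alpha\in\operatorname{Irr}(\g)$, let $\omega_{\alpha}\in L^1(\g)$ be the normal functional $h(\chi_{\alpha}'^{\,*}\,\cdot\,)$ built from the Schur orthogonality relations, normalized so that the $\omega_{\alpha}$ are mutually orthogonal idempotents for the convolution product and act as the identity on coefficients of $u^{\alpha}$. Then $P_{\alpha}=\pi_V(\omega_{\alpha})$ are idempotents with $P_{\alpha}P_{\beta}=0$ for $\alpha\neq\beta$.

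Since $\operatorname{Pol}(\g)$ is weak$^*$ dense, non-degeneracy gives $H_V=\overline{\operatorname{span}}\bigcup_{\alpha}P_{\alpha}H_V$. Moreover, $V(1\otimes P_{\alpha})\in\operatorname{Pol}(\g)\odot B(H_V)$, with coefficients in the span of the $u^{\alpha}_{ij}$. Hence every vector $\xi\in P_{\alpha}H_V$ lies in a finite-dimensional $V$-invariant subspace $K$, and the restriction $w=V|_K$ is a finite-dimensional representation with $\norm{w}\le 1$. Contractivity passes to the restriction once we compress with the orthogonal projection onto $K$, using invariance of $K$.

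\emph{Step 2: a finite-dimensional contractive representation is unitary.} Since $w$ is finite-dimensional, it is invertible with $w^{-1}=(S\otimes\id)(w)$. Set $B=(h\otimes\id)(ww^*)$ and $C=(h\otimes\id)(w^*w)$. Both are positive and invertible, because $h$ is faithful and $w$ is invertible, and both satisfy $B,C\le 1$. Applying $\id\otimes h\otimes\id$ to $(\Delta\otimes\id)(ww^*)=w_{13}w_{23}w_{23}^*w_{13}^*$ and using right invariance gives
\begin{equation*}
w(1\otimes B)w^*=1\otimes B .
\end{equation*}
Applying $h\otimes\id\otimes\id$ to $w_{23}^*w_{13}^*w_{13}w_{23}$ and using left invariance gives
\begin{equation*}
w^*(1\otimes C)w=1\otimes C .
\end{equation*}

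Now let $p$ be the spectral projection of $C$ for its largest eigenvalue $c_{\max}$. Compressing the second identity by $1\otimes p$ and using $w^*w\le 1$ and $C\le c_{\max}$, I expect to get
\begin{equation*}
(1\otimes p)w^*w(1\otimes p)=1\otimes p, \qquad (1\otimes(1-p))w(1\otimes p)=0 .
\end{equation*}
Thus $pK$ is invariant and $w$ is isometric there. Iterating down the spectrum of $C$ (restricting to $(1-p)K$ and using invertibility of $w$), I aim to show that $w$ commutes with $1\otimes C$ and is isometric, hence unitary since $\dim K<\infty$. The symmetric argument with $B$ gives co-isometry directly, as a check.

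\emph{Step 3: gluing.} Once each $V|_K$ is unitary, I need the pieces to assemble to a unitary on all of $H_V$. Consider the invariant subspaces $K_{\alpha}=P_{\alpha}H_V$ (or finite-dimensional pieces of them). For $\xi\in K_{\alpha}$ and $\eta\in K_{\beta}$ with $\alpha\neq\beta$, apply the contractivity of $V$ on $K_{\alpha}+K_{\beta}$: this sum is again a finite-dimensional invariant subspace, so $V$ restricted to it is unitary by Step 2. I then plan to show that the $P_{\alpha}$ are self-adjoint, i.e. that the spectral subspaces are mutually orthogonal. The argument: on $K_{\alpha}+K_{\beta}$, $V$ is a unitary representation, and for unitary representations the spectral idempotents $\pi(\omega_{\alpha})$ are orthogonal projections. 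Then $V$ is unitary on a dense orthogonal direct sum, with $\norm{V}\le 1$, hence unitary on $H_V$.

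\emph{Main obstacle.} The crux is Step 2 in the non-Kac case, namely extracting $w^*w=1$ from the two invariance identities together with $\norm{w}\le1$. The counit is unbounded there, so the Kac-type trace argument from \cite{BrDaSa13} is unavailable. My first attempt is the maximal-eigenspace compression sketched above; if that does not close, I would instead exploit $\norm{w\tp w\tp\cdots}\le 1$ along tensor powers.
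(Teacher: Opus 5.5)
Your Step 1 reduction and the top-eigenvalue compression in Step 2 are correct. From $w^*(1\otimes C)w=1\otimes C$, $C\le c_{\max}$ and $w^*w\le 1$ you do get $(1\otimes(1-p))w(1\otimes p)=0$ and $(1\otimes p)w^*w(1\otimes p)=1\otimes p$.

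\textbf{The gap.} The gap is the iteration that follows. Write $w_{12}=(1\otimes p)w(1\otimes(1-p))$.
\begin{itemize}
\item The subspace $(1-p)K$ is invariant only if $w_{12}=0$, and nothing you have written gives this. Showing that ``$w$ commutes with $1\otimes C$'' is exactly the statement that such off-diagonal blocks vanish, so it cannot be an output of the iteration.
\item Without it, the iteration breaks. Compressing $w^*(1\otimes C)w=1\otimes C$ by the next spectral projection $p_2\le 1-p$ produces the extra term $c_{\max}(1\otimes p_2)w^*(1\otimes p)w(1\otimes p_2)$, coming from $w_{12}$. Since $c_{\max}>c_2$, the bound by $c_2(1\otimes p_2)w^*w(1\otimes p_2)$ that drove the first step is lost.
\item Likewise, $(1-p)C(1-p)=(h\otimes\id)(w_{12}^*w_{12})+(h\otimes\id)(w_{22}^*w_{22})$. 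So it is not the operator attached to the compressed representation $w_{22}$ unless $w_{12}=0$.
\item Invertibility of $w$ cannot kill $w_{12}$: a non-unitary conjugate $(1\otimes T)u(1\otimes T^{-1})$ with $T$ upper triangular is invertible with $w_{12}\neq 0$.
\item The ``symmetric argument with $B$'' only shows that $(1-q)K$ is invariant for the top spectral projection $q$ of $B$, and that the compression to $qK$ is co-isometric. It does not give co-isometry of $w$.
\item ``Isometric, hence unitary since $\dim K<\infty$'' is not automatic either. $L^\infty(\g)\overline{\otimes}B(K)$ can be properly infinite for non-Kac $\g$, so you must use invertibility there.
\end{itemize}

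\textbf{The missing step.} You need to use the other half of contractivity. The block $w_{11}=(1\otimes p)w(1\otimes p)$ is a non-degenerate finite-dimensional representation with coefficients in $\operatorname{Pol}(\g)$. It is therefore invertible with inverse $(S\otimes\id)(w_{11})$, and being isometric it is unitary. Compressing $ww^*\le 1$ by $1\otimes p$ then gives $1\otimes p+w_{12}w_{12}^*\le 1\otimes p$, so $w_{12}=0$ and $w=w_{11}\oplus w_{22}$. Induction on $\dim K$, with a fresh $C$ for $w_{22}$, completes Step 2.

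The same observation simplifies Step 3, and you do not need orthogonality of isotypic components. For every finite-dimensional invariant $K$, both $K$ and $K^\perp$ are invariant. Hence the positive operators $1-V^*V$ and $1-VV^*$ vanish on $L^2(\g)\otimes K$, and density of these $K$ finishes the argument.

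\textbf{Comparison with the paper.} The paper avoids the reduction, the spectral iteration and the gluing entirely. With $D=1-V^*V$, it applies the right-invariant slice $\id\otimes h\otimes\id$ to $(\Delta\otimes\id)(D)=D_{23}+V_{23}^*D_{13}V_{23}$. This forces $V_{23}^*D_{13}V_{23}=0$, hence $D^{1/2}(1\otimes(\phi\otimes\id)(V))=0$, and so $D=0$ by non-degeneracy, directly on $H_V$. Unitarity then follows from \cite[Corollary 4.16]{BrDaSa13}.

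Your left-invariance identity $w^*(1\otimes C)w=1\otimes C$ keeps only averaged information, which is why you were pushed into the eigenvalue argument. Once repaired, your route does have the small advantage of not needing the isometric-implies-unitary theorem.
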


\begin{proof}
Since $\norm{V}_{L^\infty(\g)\overline{\otimes}B(H_V)}\leq 1$, the operator
\begin{equation}
D=1-V^*V
\in L^\infty(\g)\overline{\otimes}B(H_V)
\end{equation}
is positive. Since $(\Delta\otimes\operatorname{id})(V)=V_{13}V_{23}$ and $\Delta\otimes\operatorname{id}$ is a unital $*$-homomorphism, we have
\begin{align}
(\Delta\otimes\operatorname{id})(D)&=
1-(\Delta\otimes\operatorname{id})(V)^*(\Delta\otimes\operatorname{id})(V)=
1-V_{23}^*V_{13}^*V_{13}V_{23}
\\
\label{eq302}&= 1-V_{23}^*V_{23}+V_{23}^*\left(1-V_{13}^*V_{13}\right)V_{23}=D_{23}+V_{23}^*D_{13}V_{23}.
\end{align}

Note that the right invariance of the Haar state $h$ implies
\begin{align}
(\operatorname{id}\otimes h\otimes\operatorname{id}) \left((\Delta\otimes\operatorname{id})(D)\right) & = \left((\operatorname{id}\otimes h)\Delta\otimes\operatorname{id}\right )(D)\notag \\
\label{eq400}=& 1\otimes (h\otimes \operatorname{id})(D)=(\operatorname{id}\otimes h \otimes \operatorname{id})(D_{23}).
\end{align}
Then, by \eqref{eq302} and \eqref{eq400}, we obtain
\begin{equation}
(\operatorname{id}\otimes h\otimes\operatorname{id}) (D_{23}+V_{23}^*D_{13}V_{23}) = (\operatorname{id}\otimes h\otimes\operatorname{id})(D_{23}),
\end{equation}
which implies
\begin{equation}\label{eq303}
    (\operatorname{id}\otimes h\otimes\operatorname{id}) (V_{23}^*D_{13}V_{23})=0.
\end{equation}
Since $\operatorname{id}\otimes h\otimes\operatorname{id}$ is faithful, \eqref{eq303} implies
\begin{equation}
\left (D_{13}^{\frac{1}{2}}V_{23}\right )^*D_{13}^{\frac{1}{2}}V_{23}=V_{23}^*D_{13}V_{23}=0
\end{equation}
or equivalently, $D_{13}^{\frac{1}{2}}V_{23}=0$. Thus, for any $\phi\in L^\infty(\g)_*$, we obtain
\begin{equation}
(\operatorname{id}\otimes \phi\otimes \operatorname{id})\left (D_{13}^{\frac{1}{2}}V_{23}\right ) = D^{\frac{1}{2}}\left[ 1\otimes (\phi\otimes\operatorname{id})(V) \right] = 0.
\label{eq304}
\end{equation}

Since the non-degeneracy of $V$ means that
\begin{equation}
\overline{\operatorname{span} \left\{(\phi\otimes\operatorname{id})(V)\xi: \phi\in L^\infty(\g)_*, \xi\in H_V
\right\}} =H_V,
\end{equation}
it follows from \eqref{eq304} that $D^{1/2}$
vanishes on a dense subspace of $L^2(\g)\otimes H_V$. Hence we can conclude that $D=1-V^*V=0$, i.e. $V^*V=1$. Thus, $V$ is an isometric representation.

Lastly, every isometric representation of a locally compact quantum group is
unitary by \cite[Corollary 4.16]{BrDaSa13}. Thus, $V$ is unitary as desired.
\end{proof}

We now prove the corresponding result for discrete quantum groups.

\begin{theorem}\label{thm:discrete-contractive-representation-unitary}
Let $\g$ be a discrete quantum group, and let $V\in \ell^\infty(\g)\overline{\otimes}B(H_V)$ be a non-degenerate representation satisfying $\norm{V}_{\ell^\infty(\g)\overline{\otimes}B(H_V)} \leq 1$. Then $V$ is unitary.
\end{theorem}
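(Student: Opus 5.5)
We prove that each block $V^{\alpha}$ is unitary, using Lemma~\ref{lem-intertwiner} together with the contractivity $\norm{V^{\alpha}}\leq 1$. Under the decomposition \eqref{eq230}, $V$ is unitary exactly when every block $V^{\alpha}\in B(H_{\alpha})\overline{\otimes}B(H_V)$ is unitary. So we fix $\alpha\in\operatorname{Irr}(\widehat{\g})$ and take the nonzero intertwiner $T=R_{\alpha}\in\operatorname{Mor}(u^{\epsilon},u^{\overline{\alpha}}\tp u^{\alpha})$ from \eqref{eq-intertwiner}. Lemma~\ref{lem-intertwiner} (which uses non-degeneracy) gives
\begin{equation*}
V^{\overline{\alpha}}_{13}V^{\alpha}_{23}(R_{\alpha}\otimes 1)=R_{\alpha}\otimes 1 .
\end{equation*}

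The idea is that a product of contractions fixing a vector forces each factor to act isometrically on the relevant vectors. Write $\zeta=R_{\alpha}(1)=\sum_j U_{\alpha}Q_{\alpha}^{-1/2}e_j\otimes e_j$ and, for $\xi\in H_V$, set $\eta=\zeta\otimes\xi$. Since $\norm{V^{\overline{\alpha}}_{13}}\leq 1$ and $\norm{V^{\alpha}_{23}}\leq 1$, we have
\begin{equation*}
\norm{\eta}=\norm{V^{\overline{\alpha}}_{13}V^{\alpha}_{23}\eta}\leq\norm{V^{\alpha}_{23}\eta}\leq\norm{\eta},
\end{equation*}
so $\norm{V^{\alpha}_{23}\eta}=\norm{\eta}$. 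For a contraction $A$, the equality $\norm{A\eta}=\norm{\eta}$ means $\langle(1-A^*A)\eta,\eta\rangle=0$. As $1-A^*A\geq 0$, this gives $(1-A^*A)\eta=0$. Applying this with $A=V^{\alpha}_{23}=1\otimes V^{\alpha}$ yields
\begin{equation*}
(1\otimes(1-(V^{\alpha})^*V^{\alpha}))(\zeta\otimes\xi)=0 .
\end{equation*}

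Now $\zeta$, viewed as an element of $H_{\overline{\alpha}}\otimes H_{\alpha}$, corresponds to the invertible operator $U_{\alpha}Q_{\alpha}^{-1/2}$, so its slices $(\langle\cdot,f\rangle\otimes\mathrm{id})\zeta$ for $f\in H_{\overline{\alpha}}$ exhaust all of $H_{\alpha}$. Slicing the first leg therefore gives $(1-(V^{\alpha})^*V^{\alpha})(e\otimes\xi)=0$ for all $e\in H_{\alpha}$ and $\xi\in H_V$, i.e.\ $(V^{\alpha})^*V^{\alpha}=1$. Since $\alpha$ was arbitrary, $V^*V=1$ and $V$ is isometric.

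For unitarity we then invoke \cite[Corollary 4.16]{BrDaSa13}, exactly as in the proof of Theorem~\ref{thm-main-1}. Alternatively, one can obtain it directly. Run the same argument on the other factor: since $V^{\alpha}_{23}$ is now isometric, $\norm{V^{\overline{\alpha}}_{13}V^{\alpha}_{23}\eta}=\norm{V^{\alpha}_{23}\eta}$, which shows that $V^{\overline{\alpha}}_{13}$ is isometric on the vectors $V^{\alpha}_{23}\eta$. One would then use $V^{\alpha}_{23}(\zeta\otimes\xi)=(V^{\overline{\alpha}}_{13})^*(\zeta\otimes\xi)$ to extract co-isometry. The main obstacle is this co-isometry step. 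Because citing \cite[Corollary 4.16]{BrDaSa13} settles it cleanly, I would use that citation rather than the direct route.
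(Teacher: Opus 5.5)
Your proposal is correct and follows essentially the same route as the paper. You apply Lemma~\ref{lem-intertwiner} with $R_{\alpha}$, squeeze the norms to get $\norm{V^{\alpha}_{23}(R_{\alpha}\otimes 1)\xi}=\norm{(R_{\alpha}\otimes 1)\xi}$, use positivity of $1-(V^{\alpha})^*V^{\alpha}$ and invertibility of $U_{\alpha}Q_{\alpha}^{-1/2}$ (your slicing is the paper's linear-independence step) to get $(V^{\alpha})^*V^{\alpha}=1$, and conclude unitarity via \cite[Corollary 4.16]{BrDaSa13}; the unfinished direct co-isometry sketch does no harm since you rely on that citation.
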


\begin{proof}
Write $V=\displaystyle \sum_{\alpha\in \operatorname{Irr}(\widehat{\g})}V^{\alpha}$ with $V^{\alpha} = (\rho_{\alpha}\otimes\operatorname{id})(V) \in B(H_{\alpha}) \overline{\otimes} B(H_{V})$. Since $\rho_{\alpha}$ is a $*$-homomorphism, we have $\norm{V^{\alpha}}\leq \norm{V}\leq 1$. Let us consider 
\begin{equation}\label{eq305}
    D_{\alpha}= 1_{H_{\alpha}\otimes H_{V}}-(V^{\alpha})^{*}V^{\alpha}\geq 0.
\end{equation}
To obtain the desired conclusion, it is enough to show that $D_{\alpha}=0$ for all $\alpha\in \operatorname{Irr}(\widehat{\g})$.

By Lemma \ref{lem-intertwiner} and \eqref{eq-intertwiner},
\begin{equation}
    V_{13}^{\overline{\alpha}}V_{23}^{\alpha}(R_{\alpha}\otimes 1_{H_{V}})=R_{\alpha}\otimes 1_{H_{V}},
\end{equation}
where $\displaystyle R_{\alpha}=\sum_{j=1}^{n_{\alpha}} U_{\alpha}Q_{\alpha}^{-\frac{1}{2}} e^{\alpha}_j\otimes e^{\alpha}_j\in \operatorname{Mor}(u^{\epsilon},u^{\overline{\alpha}}\tp u^{\alpha})$. Therefore
\begin{align}
\norm{(R_{\alpha}\otimes 1_{H_{V}})\xi}=\norm{V_{13}^{\overline{\alpha}}V_{23}^{\alpha}(R_{\alpha}\otimes 1_{H_{V}})\xi} \leq \norm{V_{23}^{\alpha}(R_{\alpha}\otimes 1_{H_{V}})\xi}\leq \norm{(R_{\alpha}\otimes 1_{H_{V}})\xi}
\end{align}
for all $\xi\in H_V$. Thus, we obtain
\begin{align}
    \norm{V_{13}^{\overline{\alpha}}V_{23}^{\alpha}(R_{\alpha}\otimes 1_{H_{V}})\xi} = \norm{V_{23}^{\alpha}(R_{\alpha}\otimes 1_{H_{V}})\xi}= \norm{(R_{\alpha}\otimes 1_{H_{V}})\xi}
\end{align}
for all $\xi\in H_V$. This implies that
\begin{align}
\norm{(1_{H_{\overline{\alpha}}}\otimes D_{\alpha}^{\frac{1}{2}})(R_{\alpha}\otimes 1_{H_V})\xi}^2&=\la (1_{H_{\overline{\alpha}}}\otimes D_{\alpha}^{\frac{1}{2}})(R_{\alpha}\otimes 1_{H_V})\xi, (1_{H_{\overline{\alpha}}}\otimes D_{\alpha}^{\frac{1}{2}}) (R_{\alpha}\otimes 1_{H_V}) \xi\ra\\
&=\la (1_{H_{\overline{\alpha}}}\otimes D_{\alpha})(R_{\alpha}\otimes 1_{H_V})\xi, (R_{\alpha}\otimes 1_{H_V}) \xi \ra\\
&=\norm{(R_{\alpha}\otimes 1_{H_V})\xi}^2-\norm{V^{\alpha}_{23}(R_{\alpha}\otimes 1_{H_V})\xi}^2=0.
\end{align}
The second last equality is thanks to \eqref{eq305}. Thus, it follows that
\begin{align}
 0=\left(1_{H_{\overline{\alpha}}} \otimes D_{\alpha}^{\frac{1}{2}}\right) (R_{\alpha}\otimes 1_{H_{V}}) \xi=\sum_{j=1}^{n_{\alpha}} U_{\alpha}Q_{\alpha}^{-\frac{1}{2}} e^{\alpha}_j\otimes D_{\alpha}^{\frac{1}{2}}(e^{\alpha}_j\otimes \xi),
\end{align}
which is equivalent to $D_{\alpha}^{\frac{1}{2}}(e^{\alpha}_j\otimes \xi)=0$ for all $1\leq j\leq n_{\alpha}$ and $\xi\in H_V$. This means that $D_{\alpha}^{\frac{1}{2}}=0$ on $H_{\alpha}\otimes H_V$, so we can conclude that $D_{\alpha}=0$ for all $\alpha\in \operatorname{Irr}(\widehat{\g})$ and  $V$ is an isometric representation. Lastly, every isometric representation of a locally compact quantum group is unitary by \cite[Corollary 4.16]{BrDaSa13}. Therefore, $V$ is unitary.
\end{proof}

\begin{corollary}
Let $\g$ be an arbitrary compact or discrete quantum group. Then any non-degenerate completely contractive algebra homomorphism $\pi:L^1(\g)\longrightarrow B(H_\pi)$ is automatically a $*$-homomorphism.
\end{corollary}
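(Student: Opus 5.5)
The plan is to reduce the corollary to Theorems \ref{thm-main-1} and \ref{thm:discrete-contractive-representation-unitary} through the correspondence between representations of $\g$ and completely bounded homomorphisms of $L^1(\g)$. The first step invokes \cite{BrDaSa13}: for a locally compact quantum group, every completely bounded algebra homomorphism $\pi:L^1(\g)\to B(H_\pi)$ has the form
\begin{equation*}
\pi(\omega)=(\omega\otimes\operatorname{id})(V), \qquad \omega\in L^1(\g),
\end{equation*}
for a unique representation $V\in L^\infty(\g)\overline{\otimes}B(H_\pi)$. This is the converse to the construction used in the proof of Lemma \ref{lem-intertwiner}. The identification $CB(L^1(\g),B(H))\cong L^\infty(\g)\overline{\otimes}B(H)$ is a complete isometry, so $\norm{V}=\norm{\pi}_{cb}\leq 1$. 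Multiplicativity of $\pi$ corresponds to $(\Delta\otimes\operatorname{id})(V)=V_{13}V_{23}$. Non-degeneracy of $\pi$, meaning that the span of $\pi(L^1(\g))H_\pi$ is dense, is exactly non-degeneracy of $V$ as defined in the preliminaries.

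The second step applies Theorem \ref{thm-main-1} if $\g$ is compact, or Theorem \ref{thm:discrete-contractive-representation-unitary} if $\g$ is discrete. Either way $V$ is unitary.

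The third step shows that unitarity of $V$ makes $\pi$ a $*$-homomorphism. Here the involution on $L^1(\g)$ is the standard one, defined on the dense subalgebra $L^1_\sharp(\g)$ by $\omega^\sharp(x)=\overline{\omega(S(x)^*)}$ for $x\in D(S)$. One then checks $\pi(\omega^\sharp)=\pi(\omega)^*$. For unitary $V$ this uses the standard fact that $(S\otimes\operatorname{id})(V)=V^*$, in the sense that $(\omega\otimes\operatorname{id})(V)\in D(S)$-compatible slices satisfy $(\omega\otimes\operatorname{id})(V^*)=(\omega^\sharp\otimes\operatorname{id})(V)^*$ appropriately. This is the usual statement that unitary corepresentations give $*$-representations of $L^1_\sharp(\g)$ \cite{BrDaSa13,KuVa00}.

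In the compact and discrete cases the step can also be done more concretely. For compact $\g$, $V$ decomposes into irreducibles $u^\alpha$, and one computes directly on $\operatorname{Pol}(\g)$, where $S(u^\alpha_{ij})=(u^\alpha_{ji})^*$ for unitary $u^\alpha$. For discrete $\g$, $\omega\mapsto(\omega\otimes\operatorname{id})(V)$ is determined by the normal $*$-representation structure of the blocks.

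The main obstacle is the bookkeeping in the first step. One must confirm that the cited correspondence really identifies the cb norm of $\pi$ with $\norm{V}$, and the two non-degeneracy notions with each other. After that, the contractive-implies-unitary theorems and the standard unitary-corepresentation result finish the proof.
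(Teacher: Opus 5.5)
Your proposal is correct and takes the same route as the paper. You identify $\pi$ with a contractive non-degenerate representation $V$ via the completely isometric identification $CB(L^1(\g),B(H_\pi))\cong L^\infty(\g)\overline{\otimes}B(H_\pi)$, apply Theorem \ref{thm-main-1} or Theorem \ref{thm:discrete-contractive-representation-unitary} to get $V$ unitary, and conclude from \cite{BrDaSa13} that $\pi$ is a $*$-homomorphism; one small slip is that your inline identity in the third step should read $(\omega\otimes\operatorname{id})(V)=(\omega^\sharp\otimes\operatorname{id})(V)^*$, i.e.\ $\pi(\omega^\sharp)=\pi(\omega)^*$, with no star on $V$ on the left-hand side.
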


\begin{proof}
Under the standard completely isometric correspondence between completely bounded maps on $L^1(\g)$ and elements of $L^\infty(\g)\overline{\otimes}B(H_\pi)$, the completely contractive algebra homomorphism $\pi$ is associated with a contractive representation $V_\pi\in L^\infty(\g)\overline{\otimes}B(H_\pi)$ satisfying
\begin{equation}
\pi(\varphi) = (\varphi\otimes\operatorname{id})(V_\pi), \qquad \varphi\in L^1(\g).
\end{equation}
The non-degeneracy of $\pi$ is equivalent to the non-degeneracy of $V_{\pi}$. Hence, by Theorem \ref{thm-main-1} and Theorem \ref{thm:discrete-contractive-representation-unitary}, $V_\pi$ is unitary. Hence $\pi$ is a $*$-homomorphism; see
\cite{BrDaSa13}.
\end{proof}

\subsection{Non-unitarizable non-degenerate representations of compact quantum groups}\label{sec-non-unitarizability}
In \cite{BrYo19}, the authors established the existence of non-unitarizable non-degenerate representations under the assumption that the dual discrete quantum group has subexponential growth. They also constructed a non-unitarizable non-degenerate representation for the somewhat artificial example
$\g=\prod_{n=1}^{\infty}SU_{q_n}(2)$, where $(q_n)_{n=1}^{\infty}\subseteq(0,1)$ satisfies $\displaystyle \lim_{n\rightarrow\infty}q_n=0$. However, for individual quantum groups such as $\g=SU_q(2)$, their argument does not provide an explicit non-unitarizable non-degenerate representation. The main obstruction is that the norms of the contragredient representations are not uniformly bounded in general. Actually, the norms are unbounded if $\g$ is non-Kac $O_F^+$ or non-Kac $U_F^+$ by the following proposition.

\begin{proposition}\label{prop300}
For any compact quantum group $\g$ and $\alpha\in \operatorname{Irr}(\g)$,
\begin{align}
\label{eq30}
\norm{\left(u^{\alpha}\right)^c}_{L^{\infty}(\g)\overline{\otimes}M_{n_{\alpha}}}^2 \geq \frac{n_{\alpha}}{d_{\alpha}}
\max\left\{\norm{Q_{\alpha}}_{\mathrm{op}},\norm{Q_{\alpha}^{-1}}_{\mathrm{op}}\right\}.
\end{align}
In particular, if $\g$ is either a non-Kac free orthogonal quantum group $O_F^+$ or a non-Kac free unitary quantum group $U_F^+$, then
\begin{align}
\sup_{\alpha\in\operatorname{Irr}(\g)}\norm{\left(u^{\alpha}\right)^c}_{L^{\infty}(\g)\overline{\otimes}M_{n_{\alpha}}}
=\infty.
\end{align}

\end{proposition}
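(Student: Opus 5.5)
The inequality \eqref{eq30} will follow from the Schur-orthogonality identities \eqref{eq213} and \eqref{eq214} together with the fact that $h\otimes\operatorname{id}$ is a contractive positive map. Write $X=(u^{\alpha})^c$. Since $h$ is a state, $h\otimes\operatorname{id}:L^{\infty}(\g)\overline{\otimes}M_{n_\alpha}\to M_{n_\alpha}$ is a unital completely positive map, and therefore has norm $1$. This gives
\begin{equation*}
\norm{(h\otimes\operatorname{id})(X^*X)}_{\mathrm{op}}\leq\norm{X^*X}=\norm{X}^2, \qquad \norm{(h\otimes\operatorname{id})(XX^*)}_{\mathrm{op}}\leq\norm{XX^*}=\norm{X}^2.
\end{equation*}
By \eqref{eq213} the left-hand side of the first inequality equals $\frac{n_\alpha}{d_\alpha}\norm{Q_\alpha}_{\mathrm{op}}$, and by \eqref{eq214} the left-hand side of the second equals $\frac{n_\alpha}{d_\alpha}\norm{Q_\alpha^{-1}}_{\mathrm{op}}$. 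Taking the maximum of the two bounds yields \eqref{eq30}. This step is routine.

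For the unboundedness claim, it suffices to exhibit a sequence $\alpha_k\in\operatorname{Irr}(\g)$ with
\begin{equation*}
\frac{n_{\alpha_k}}{d_{\alpha_k}}\norm{Q_{\alpha_k}}_{\mathrm{op}}\to\infty.
\end{equation*}
For $O_F^+$ I will take the irreducibles $u^{(k)}$, $k\geq 0$, which satisfy the $SU_q(2)$-type fusion rules. In the non-Kac case we can normalize so that $Q_{u}$ has largest eigenvalue $\lambda>1$. Since $u^{(k)}$ is a subrepresentation of $u^{\tp k}$ and $Q$ is multiplicative by \eqref{eq212}, its $Q$-operator is a compression of $Q_u^{\otimes k}$.

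The main step is to show that $\norm{Q_{u^{(k)}}}_{\mathrm{op}}$ grows like $\lambda^{k}$, while the ratio $d_k/n_k$ grows strictly more slowly. For the first part I will use the highest weight vector: $e_1^{\otimes k}$ should lie in the range of the Jones--Wenzl-type projection onto $u^{(k)}$. For instance, in the diagonal normalization it is fixed by the projection, since the intertwiner $R\in\operatorname{Mor}(1,u\tp u)$ has no $e_1\otimes e_1$ component when $Q$ is diagonalized appropriately. This gives $\norm{Q_{u^{(k)}}}_{\mathrm{op}}\geq \lambda^{k}$.

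For the second part, the quantum dimensions satisfy $d_{k+1}=d_1d_k-d_{k-1}$, so $d_k\sim c\,q_0^{-k}$, where $q_0+q_0^{-1}=d_1=\operatorname{Tr}(Q_u)$. Meanwhile $n_k$ grows like $r^k$, with $r+r^{-1}=N$. The bound then reduces to the estimate
\begin{equation*}
\lambda\cdot r>q_0^{-1},\qquad\text{i.e.}\qquad \lambda\, r\, q_0>1.
\end{equation*}
Here $q_0^{-1}<\operatorname{Tr}(Q_u)$, and $\operatorname{Tr}(Q_u)=\operatorname{Tr}(Q_u^{-1})$ couples $\lambda$ with the remaining eigenvalues. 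I expect this elementary but delicate inequality to be the main obstacle. In low dimension, e.g.\ $N=2$, where $n_k=k+1$, it becomes $\lambda q_0>1$ up to polynomial factors. This holds because for $SU_q(2)$ one has $\lambda=q_0^{-1}$ and the linear factor $n_k/d_k\cdot\lambda^k\sim k+1\to\infty$ wins. For general $N$ I would first try to prove $\lambda\geq q_0^{-1}$ directly from $\operatorname{Tr}(Q_u)=\operatorname{Tr}(Q_u^{-1})$, since $r\geq1$ then finishes the argument.

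For $U_F^+$ I will use words such as $(u\bar u)^{\tp k}$, or simply $u^{\tp k}$, which are irreducible by Banica's free fusion rules. Then $Q=Q_u^{\otimes k}$ exactly and $n=N^k$, while the quantum dimension $d=\operatorname{Tr}(Q_u)^k$. Hence
\begin{equation*}
\frac{n}{d}\,\norm{Q}_{\mathrm{op}}=\left(\frac{N\lambda}{\operatorname{Tr}(Q_u)}\right)^{k}.
\end{equation*}
The non-Kac assumption allows normalizing the larger of $\norm{Q_u}$ and $\norm{Q_u^{-1}}$. I will pick whichever of $Q_u$ or $Q_u^{-1}$ makes $N\lambda>\operatorname{Tr}(Q_u)$, which is possible since not all eigenvalues are equal and the maximal eigenvalue exceeds the average. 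This is strict, so the ratio diverges.
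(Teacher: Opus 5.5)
Your proof of \eqref{eq30} is the paper's argument. So is your treatment of $U_F^+$ via the irreducible representations $u^{a^k}$; the other option you mention, built from $u$ and $\bar u$, does not work as stated, because the tensor product of $u$ and $\bar u$ contains the trivial representation. The case $N=2$ of $O_F^+$ also matches the paper. Your highest-weight bound $\norm{Q_{u^{(k)}}}_{\mathrm{op}}\geq\lambda^k$ is sound: $(Q_u\otimes Q_u)R=R$ puts $R(1)$ in the $1$-eigenspace of $Q_u\otimes Q_u$, hence orthogonal to $e_1\otimes e_1$.

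The gap is $O_F^+$ with $N\geq 3$. You correctly reduce this case to $\lambda r q_0>1$, but you leave it unproved. The route you propose, first proving $\lambda\geq q_0^{-1}$, cannot work, because that inequality is false for every $O_F^+$ with $N\geq3$. Since $u\cong\bar u$, both $\lambda$ and $\lambda^{-1}\neq\lambda$ are eigenvalues of $Q_u$, and there are $N-2\geq1$ further positive eigenvalues. Hence $q_0+q_0^{-1}=d_1=\operatorname{Tr}(Q_u)>\lambda+\lambda^{-1}$, that is, $q_0^{-1}>\lambda$. This already fails in the Kac case, where $\lambda=1<r=q_0^{-1}$. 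So the divergence must come from $r>1$ outweighing the factor $q_0^{-1}/\lambda>1$, and the two have to be compared directly.

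That comparison takes two lines. Let $\lambda_1,\dots,\lambda_N$ be the eigenvalues of $Q_u$. They lie in $[\lambda^{-1},\lambda]$, where $x+x^{-1}\leq\lambda+\lambda^{-1}$. Using $\operatorname{Tr}(Q_u)=\operatorname{Tr}(Q_u^{-1})$,
\[
d_1=\tfrac12\sum_{i=1}^{N}(\lambda_i+\lambda_i^{-1})\leq\tfrac N2(\lambda+\lambda^{-1}).
\]
Writing $N=r+r^{-1}$,
\[
\lambda r+(\lambda r)^{-1}-\tfrac N2(\lambda+\lambda^{-1})=\tfrac12(\lambda-\lambda^{-1})(r-r^{-1})>0 .
\]
Thus $\lambda r+(\lambda r)^{-1}>q_0+q_0^{-1}$. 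Since $x\mapsto x+x^{-1}$ is increasing on $[1,\infty)$, this gives $\lambda r q_0>1$.

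The recursions you quote have solutions
\[
n_k=\frac{r^{k+1}-r^{-k-1}}{r-r^{-1}}\geq r^k,\qquad d_k=\frac{q_0^{-k-1}-q_0^{k+1}}{q_0^{-1}-q_0}\leq\frac{q_0^{-k-1}}{q_0^{-1}-q_0}.
\]
Combining these with the bound above yields
\[
\frac{n_k}{d_k}\norm{Q_{u^{(k)}}}_{\mathrm{op}}\geq(1-q_0^2)(\lambda r q_0)^k\to\infty.
\]
The paper does not prove this step itself; it cites the exponential growth of $n_k\norm{Q_1}_{\mathrm{op}}^k/d_k$ from the proof of \cite[Theorem 3.3]{BrVeYo19}. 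With the inequality above, your argument becomes a self-contained alternative for this case.
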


\begin{proof}
First of all, by \eqref{eq213} and \eqref{eq214}, 
\begin{align}
   (h\otimes \operatorname{id})\left ( \left ( u^{\alpha}\right )^c\left [ \left ( u^{\alpha}\right )^c \right ]^*\right )&=\frac{n_{\alpha}}{d_{\alpha}}Q_{\alpha}^{-1},\\
   (h\otimes \operatorname{id})\left ( \left [ \left ( u^{\alpha}\right )^c \right ]^* \left ( u^{\alpha}\right )^c \right )&=\frac{n_{\alpha}}{d_{\alpha}}Q_{\alpha}.
\end{align}
This implies
\begin{align}
    &\norm{\left ( u^{\alpha}\right )^c}_{L^{\infty}(\g)\overline{\otimes}M_{n_{\alpha}}}^2\\
    &\geq \max\left\{\norm{(h\otimes \operatorname{id})\left ( \left ( u^{\alpha}\right )^c\left [ \left ( u^{\alpha}\right )^c \right ]^*\right )}_{M_{n_{\alpha}}},\norm{(h\otimes \operatorname{id})\left (\left [ \left ( u^{\alpha}\right )^c \right ]^* \left ( u^{\alpha}\right )^c\right )}_{M_{n_{\alpha}}}\right\}\\
    &\geq \frac{n_{\alpha}}{d_{\alpha}}\max\left\{\norm{Q_{\alpha}^{-1}}_{\text{op}},\norm{Q_{\alpha}}_{\text{op}}
\right\}
\end{align}
since the Haar state $h$ is completely contractive on $L^{\infty}(\g)$.

From now on, let us verify the last assertion for free orthogonal and free unitary quantum groups. First, let $\g=O_F^+$ be of non-Kac type. Recall that $\operatorname{Irr}(O_F^+)$ is indexed by $\mathbb{N}_0$ by \cite{Ba96}, and
\begin{equation}
\norm{Q_k}_{\mathrm{op}}=\norm{Q_k^{-1}}_{\mathrm{op}}=\norm{Q_1}_{\mathrm{op}}^k.
\end{equation}
When $n_1=2$, we have $n_k=k+1$ and $\displaystyle \frac{\norm{Q_k}_{\mathrm{op}}}{d_k}=\frac{\norm{Q_k^{-1}}_{\mathrm{op}}}{d_k}=\frac{1-q^2}{1-q^{2k+2}}\in (1-q^2,1]$. Consequently,
\begin{equation}
\frac{n_k}{d_k}\max\left\{\norm{Q_k}_{\mathrm{op}},\norm{Q_k^{-1}}_{\mathrm{op}}\right\}\geq n_k(1-q^2)\longrightarrow_{k}\infty.
\end{equation}

Suppose next that $n_1\geq 3$. It was shown in the proof of \cite[Theorem 3.3]{BrVeYo19} that $\displaystyle \frac{n_k\norm{Q_1}_{\mathrm{op}}^k}{d_k}$ grows exponentially as $k\to\infty$. Since $\norm{Q_k}_{\mathrm{op}}=\norm{Q_1}_{\mathrm{op}}^k$, we obtain
\begin{equation}
\frac{n_k}{d_k}\max\left\{\norm{Q_k}_{\mathrm{op}},\norm{Q_k^{-1}}_{\mathrm{op}}\right\}=\frac{n_k\norm{Q_1}_{\mathrm{op}}^k}{d_k}\longrightarrow_{k} \infty.
\end{equation}
Thus, \eqref{eq30} implies
\begin{equation}
\sup_{\alpha\in\operatorname{Irr}(O_F^+)}
\norm{\left(u^{\alpha}\right)^c}_{L^{\infty}(O_F^+)\overline{\otimes}M_{n_{\alpha}}}
=\infty.
\end{equation}

Finally, let $\g=U_F^+$ be a non-Kac free unitary quantum group with $F\in\operatorname{GL}_N(\Comp)$. The irreducible representations of $U_F^+$ are indexed by the free monoid generated by two letters $a$ and $b=\overline{a}$, where the fundamental representation corresponds to $a$. By the fusion rules of $U_F^+$ \cite[Theorem 1]{Ban97}, $u^{a^k}=(u^a)^{{\tiny\tp} k}$ is irreducible for all $k\in \n_0$, and $n_{a^k}=n_a^k$, $d_{a^k}=d_a^k$, $Q_{a^k}=Q_a^{\otimes k}$. Since $U_F^+$ is of non-Kac type, it follows that $\displaystyle \frac{n_a\norm{Q_a}_{\mathrm{op}}}{d_a}>1$. This implies
\begin{align}
\frac{n_{a^k}}{d_{a^k}}\norm{Q_{a^k}}_{\mathrm{op}}=\left(\frac{n_a\norm{Q_a}_{\mathrm{op}}}{d_a}\right)^k \longrightarrow_{k}\infty.
\end{align}
Therefore, \eqref{eq30} implies
\begin{equation}
\sup_{\alpha\in\operatorname{Irr}(U_F^+)}
\norm{\left(u^{\alpha}\right)^c}_{L^{\infty}(U_F^+)\overline{\otimes}M_{n_{\alpha}}}
=\infty.
\end{equation}
\end{proof}
To overcome the unboundedness issue raised in Proposition \ref{prop300}, our strategy is to consider intermediate representations between the contragredient representation $(u^{\alpha})^c$ and its unitarization
\begin{equation}
(1\otimes Q_{\alpha}^{\frac{1}{2}})(u^{\alpha})^c(1\otimes Q_{\alpha}^{-\frac{1}{2}}).    
\end{equation}
More precisely, for $0\leq \theta\leq 1$, we consider
\begin{equation}
v^{\alpha}_{\theta}=(1\otimes Q_{\alpha}^{\frac{\theta}{2}})(u^{\alpha})^c
(1\otimes Q_{\alpha}^{-\frac{\theta}{2}}).
\end{equation}
Vector-valued complex interpolation plays a crucial role in controlling the norms of these intermediate representations. A general argument is as follows.

\begin{lemma}\label{lem30}
For any $V\in L^{\infty}(\g)\overline{\otimes} M_n$, an invertible positive $T\in M_n$, and $\theta\in (0,1)$, we have
\begin{align}
    &\norm{\left (1\otimes T^{\theta}\right )V\left (1\otimes T^{-\theta}\right )}_{L^{\infty}(\g)\overline{\otimes} M_n}\leq \norm{V}_{L^{\infty}(\g)\overline{\otimes} M_n}^{1-\theta}\cdot \norm{\left (1\otimes T\right )V\left (1\otimes T^{-1}\right )}_{L^{\infty}(\g)\overline{\otimes} M_n}^{\theta}.
\end{align}
\end{lemma}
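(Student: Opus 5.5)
This is a Hadamard three-lines argument applied to an analytic family of operators obtained by complexifying the exponent $\theta$. Write $M=L^{\infty}(\g)\overline{\otimes}M_n$, $A=\norm{V}_M$ and $B=\norm{(1\otimes T)V(1\otimes T^{-1})}_M$. If $V=0$ there is nothing to prove, so assume $A,B>0$. Let $S=\{z\in\Comp: 0\le \operatorname{Re} z\le 1\}$ and define
\begin{equation*}
F(z)=(1\otimes T^{z})V(1\otimes T^{-z}),\qquad z\in S,
\end{equation*}
where $T^{z}=\exp(z\log T)$ through the functional calculus of the positive invertible matrix $T$. Diagonalise $T=\sum_k \lambda_k p_k$ with $\lambda_k>0$ and spectral projections $p_k$. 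Then
\begin{equation*}
F(z)=\sum_{k,l}\lambda_k^{z}\lambda_l^{-z}(1\otimes p_k)V(1\otimes p_l).
\end{equation*}
This is a finite sum of fixed elements of $M$ multiplied by entire scalar functions. So $F$ is entire, takes values in $M$, and is bounded on $S$, because $|\lambda^{z}|=\lambda^{\operatorname{Re} z}$ is bounded there.

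Next I control $F$ on the two boundary lines. For real $t$, $T^{it}$ is unitary, so $1\otimes T^{\pm it}$ are unitaries in $M$. It follows that
\begin{equation*}
\norm{F(it)}_M=\norm{V}_M=A,\qquad \norm{F(1+it)}_M=\norm{(1\otimes T^{it})(1\otimes T)V(1\otimes T^{-1})(1\otimes T^{-it})}_M=B.
\end{equation*}

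To pass to scalar functions, I take arbitrary normal functionals $\omega\in M_*$ with $\norm{\omega}\le 1$ and set $f_\omega(z)=\omega(F(z))\,A^{z-1}B^{-z}$. Each $f_\omega$ is bounded and analytic on the interior of $S$ and continuous on $S$. On the boundary, $|f_\omega(it)|\le A\cdot A^{-1}=1$ and $|f_\omega(1+it)|\le B\cdot B^{-1}=1$. The three-lines lemma then gives $|f_\omega(\theta)|\le 1$, that is, $|\omega(F(\theta))|\le A^{1-\theta}B^{\theta}$.

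Finally I take the supremum over all such $\omega$. Since $M$ is the dual of $M_*$, the norm of $F(\theta)$ is attained as a supremum over the unit ball of $M_*$, so $\norm{F(\theta)}_M\le A^{1-\theta}B^{\theta}$. This is exactly the stated bound.

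I do not expect a genuine obstacle. The only points needing care are analyticity and boundedness of $F$ on the strip, which the finite spectral decomposition settles, and the duality step, which reduces the vector-valued estimate to the scalar three-lines lemma.
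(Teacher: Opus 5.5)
Your proof is correct and follows the paper's argument: the same analytic family $F(z)=(1\otimes T^{z})V(1\otimes T^{-z})$, the same boundary estimates via unitarity of $T^{it}$, and the three-lines theorem. The only difference is that you prove the vector-valued three-lines step yourself, by testing against normal functionals and checking analyticity through the spectral decomposition of $T$, whereas the paper cites the Banach-space-valued version from Reed--Simon directly.
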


\begin{proof}
Consider the $L^{\infty}(\g)\overline{\otimes}M_n$-valued analytic
function
\begin{equation}
F(z)=(1\otimes T^z)V(1\otimes T^{-z}),
\qquad 0\leq\operatorname{Re}(z)\leq 1.
\end{equation}
Since $T^{it}$ is unitary for any $t\in\mathbb{R}$, we have
\begin{equation}
\norm{F(it)}_{L^{\infty}(\g)\overline{\otimes}M_n}=\norm{V}_{L^{\infty}(\g)\overline{\otimes}M_n}.
\end{equation}
Similarly,
\begin{align}
\norm{F(1+it)}_{L^{\infty}(\g)\overline{\otimes}M_n}&=\norm{(1\otimes T^{it})(1\otimes T)V(1\otimes T^{-1})(1\otimes T^{-it})}_{L^{\infty}(\g)\overline{\otimes}M_n}
\\
&=
\norm{(1\otimes T)V(1\otimes T^{-1})}_{L^{\infty}(\g)\overline{\otimes}M_n}.
\end{align}
Thus, the Hadamard three-line theorem for Banach space-valued
analytic functions \cite[Appendix to Section IX.4]{ReSi75} gives
\begin{align}
&\norm{(1\otimes T^{\theta})V(1\otimes T^{-\theta})}_{L^{\infty}(\g)\overline{\otimes}M_n}
=\norm{F(\theta)}_{L^{\infty}(\g)\overline{\otimes}M_n}\\
&\leq \norm{V}_{L^{\infty}(\g)\overline{\otimes}M_n}^{1-\theta} \norm{(1\otimes T)V(1\otimes T^{-1})}_{L^{\infty}(\g)\overline{\otimes}M_n}^{\theta}.
\end{align}
\end{proof}

Applying Lemma \ref{lem30} gives the following estimate.

\begin{proposition}\label{prop30}
Let $\g$ be a compact quantum group, $\alpha\in \operatorname{Irr}(\g)$ and $\theta(\alpha)\in [0,1]$. Then the following representation
\begin{equation}
    v^{\alpha}_{\theta(\alpha)}=\left (1\otimes Q_{\alpha}^{\frac{\theta(\alpha)}{2}}\right )(u^{\alpha})^c \left (1\otimes Q_{\alpha}^{-\frac{\theta(\alpha)}{2}}\right )\in L^{\infty}(\g)\otimes M_{n_{\alpha}}
\end{equation}
satisfies $\norm{v^{\alpha}_{\theta(\alpha)}}_{L^{\infty}(\g)\overline{\otimes}M_{n_{\alpha}}}\leq n_{\alpha}^{1-\theta(\alpha)}$.     
\end{proposition}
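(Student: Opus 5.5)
The plan is to apply Lemma \ref{lem30} with $V=(u^{\alpha})^c$, $T=Q_{\alpha}^{1/2}$ and $\theta=\theta(\alpha)$. The two endpoints $\theta=0$ and $\theta=1$ must be handled directly; for $\theta\in(0,1)$ the lemma gives
\begin{equation*}
\norm{v^{\alpha}_{\theta}}\leq \norm{(u^{\alpha})^c}^{1-\theta}\cdot\norm{(1\otimes Q_{\alpha}^{\frac12})(u^{\alpha})^c(1\otimes Q_{\alpha}^{-\frac12})}^{\theta}.
\end{equation*}
The second factor is the norm of a unitary, recalled in the preliminaries, so it equals $1$. The whole matter therefore reduces to the endpoint estimate $\norm{(u^{\alpha})^c}_{L^{\infty}(\g)\overline{\otimes}M_{n_{\alpha}}}\leq n_{\alpha}$. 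This estimate also covers $\theta=0$, while $\theta=1$ is the unitary case with norm $1=n_{\alpha}^0$.

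For the endpoint bound I will write $(u^{\alpha})^c=\sum_{i,j}(u^{\alpha}_{ij})^*\otimes e_{ij}$ and estimate it crudely. Since $u^{\alpha}$ is unitary, each coefficient satisfies $\norm{u^{\alpha}_{ij}}\leq 1$. I would then group the sum by columns: $(u^{\alpha})^c=\sum_{j}C_j$, where $C_j=\sum_i (u^{\alpha}_{ij})^*\otimes e_{ij}$. Then
\begin{equation*}
C_j^*C_j=\sum_i u^{\alpha}_{ij}(u^{\alpha}_{ij})^*\otimes e_{jj}.
\end{equation*}
Now $\sum_i u^{\alpha}_{ij}(u^{\alpha}_{ij})^*$ is the $(j,j)$ entry of the matrix product $\overline{u}^{\,t}\cdot$ (entrywise transpose-type expression), so I should not rely on it. The safe bound is $\norm{C_j^*C_j}\leq\sum_i\norm{u^{\alpha}_{ij}}^2\leq n_{\alpha}$, which only gives $\norm{(u^{\alpha})^c}\leq n_{\alpha}^{3/2}$. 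That is too weak, so I need a better grouping.

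A better route is to use the rows of $u^{\alpha}$, which are unitary-controlled. Unitarity $u^{\alpha}(u^{\alpha})^*=1$ gives $\sum_j u^{\alpha}_{ij}(u^{\alpha}_{ij})^*=1$ for each $i$. Setting $R_i=\sum_j (u^{\alpha}_{ij})^*\otimes e_{ij}$, we get
\begin{equation*}
R_iR_i^*=\sum_j (u^{\alpha}_{ij})^*u^{\alpha}_{ij}\otimes e_{ii},
\end{equation*}
which is again of the wrong order. Instead, $R_i^*R_i=\sum_{j,k}u^{\alpha}_{ij}(u^{\alpha}_{ik})^*\otimes e_{jk}$, and I will view $R_i=(1\otimes e_i)\,r_i$. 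Here $r_i=\sum_j (u^{\alpha}_{ij})^*\otimes \langle e_j|$ is a row operator whose norm is $\norm{\sum_j (u^{\alpha}_{ij})^*u^{\alpha}_{ij}}^{1/2}$. Its column version has norm $\norm{\sum_j u^{\alpha}_{ij}(u^{\alpha}_{ij})^*}^{1/2}=1$.

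So I would choose whichever factorization makes the unitarity relation $\sum_j u_{ij}u_{ij}^*=1$ (or $\sum_i u_{ij}^*u_{ij}=1$) appear. Each of the $n_{\alpha}$ pieces then has norm at most $1$, and the triangle inequality gives $\norm{(u^{\alpha})^c}\leq n_{\alpha}$.

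Concretely, I take $C_j=\sum_i(u^{\alpha}_{ij})^*\otimes e_{ij}$ and compute
\begin{equation*}
C_jC_j^*=\sum_{i,k}(u^{\alpha}_{ij})^*u^{\alpha}_{kj}\otimes e_{ik}=X^*X,
\end{equation*}
where $X=\sum_k u^{\alpha}_{kj}\otimes e_{1k}$ (a row). Then $\norm{X}^2=\norm{XX^*}=\norm{\sum_k u^{\alpha}_{kj}(u^{\alpha}_{kj})^*}$, which is bounded by $\norm{\sum_k (u^{\alpha}_{kj})\ldots}$. I expect this bookkeeping to be the main obstacle: choosing the row/column factorization so that exactly the unitarity identity $\sum_k (u^{\alpha}_{kj})^*u^{\alpha}_{kj}=1$ (from $(u^{\alpha})^*u^{\alpha}=1$) or its companion appears. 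If the direct route fails, the fallback is to bound $\norm{(u^{\alpha})^c}$ via the antipode. Since $(u^{\alpha})^c$ relates to $(S\otimes\mathrm{id})(u^{\alpha})$ transposed, the transpose map on $M_{n_{\alpha}}$ has cb-norm $n_{\alpha}$, and $u^{\alpha}$ is unitary; this yields $\norm{(u^{\alpha})^c}=\norm{(\mathrm{id}\otimes t)((u^{\alpha})^{*})}\leq n_{\alpha}\norm{u^{\alpha}}=n_{\alpha}$. This transpose-cb-norm argument is in fact the cleanest, and I would lead with it.
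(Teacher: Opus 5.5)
Your final argument is correct and is essentially the paper's proof: Lemma~\ref{lem30} with $V=(u^{\alpha})^c$ and $T=Q_{\alpha}^{1/2}$, the unitary endpoint of norm $1$, and the bound $\norm{(u^{\alpha})^c}=\norm{(\mathrm{id}\otimes t)((u^{\alpha})^*)}\leq\norm{t}_{cb}=n_{\alpha}$ from the cb-norm of the transpose. Your separate handling of $\theta\in\{0,1\}$, which Lemma~\ref{lem30} as stated does not cover, is a small improvement on the paper. Drop the row/column detour, because those pieces need not have norm at most $1$: for $SU_q(2)$ one of them has norm $\sqrt{2-q^{2}}>1$. If you want an elementary substitute for the cb-norm of the transpose, split $(u^{\alpha})^c$ into its $n_{\alpha}$ cyclic diagonals $\sum_{i}(u^{\alpha}_{i,i+k})^*\otimes e_{i,i+k}$ (indices mod $n_{\alpha}$). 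Each is a diagonal operator of norm at most $1$ times a permutation unitary, so the triangle inequality gives $n_{\alpha}$.
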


\begin{proof}
For any representation $u\in L^{\infty}(\g)\overline{\otimes}B(H_u)$, it is straightforward to see that the following conjugate
\begin{equation}
u_T= (1\otimes T)u(1\otimes T^{-1})\in L^{\infty}(\g)\overline{\otimes}B(H_u)
\end{equation}
is also a representation for any invertible $T\in B(H_u)$. Thus, the conjugate 
\begin{equation}
    v^{\alpha}_{\theta(\alpha)}=\left (1\otimes Q_{\alpha}^{\frac{\theta(\alpha)}{2}}\right )(u^{\alpha})^c \left (1\otimes Q_{\alpha}^{-\frac{\theta(\alpha)}{2}}\right )
\end{equation}
is also a representation of $\g$.

 The contragredient representation $v^{\alpha}_0=(u^{\alpha})^c$ satisfies $\norm{v^{\alpha}_0}_{L^{\infty}(\g)\overline{\otimes}M_{n_{\alpha}}}\leq n_{\alpha}$ since the completely bounded norm of the transpose map on $M_{n_{\alpha}}$ is $n_{\alpha}$ \cite{Tom83}. On the other hand, its unitarized version $v^{\alpha}_1=\left (1\otimes Q_{\alpha}^{\frac{1}{2}}\right )\left (u^{\alpha}\right )^c \left (1\otimes Q_{\alpha}^{-\frac{1}{2}}\right )$ satisfies $\norm{v^{\alpha}_1}_{L^{\infty}(\g)\overline{\otimes}M_{n_{\alpha}}}=1$. Hence, by Lemma \ref{lem30}, we obtain
 \begin{equation}
     \norm{v^{\alpha}_{\theta(\alpha)}}_{L^{\infty}(\g)\overline{\otimes}M_{n_{\alpha}}}\leq \norm{v^{\alpha}_0}_{L^{\infty}(\g)\overline{\otimes}M_{n_{\alpha}}}^{1-\theta(\alpha)}\norm{v^{\alpha}_1}_{L^{\infty}(\g)\overline{\otimes}M_{n_{\alpha}}}^{\theta(\alpha)}\leq  n_{\alpha}^{1-\theta(\alpha)} .
 \end{equation}

\end{proof}

From now on, let us fix a subset $E\subseteq \operatorname{Irr}(\g)$ and $H_E=\displaystyle \bigoplus_{\alpha\in E}H_{\alpha}$, and consider functions $\theta:E \rightarrow [0,1]$. Suppose that there exists a constant $C>0$ satisfying
\begin{equation}
\sup_{\alpha\in E} n_{\alpha}^{1-\theta(\alpha)}\leq C.
\end{equation}
Then the following non-degenerate direct sum representation
\begin{align}
    \label{eq02}&v_{E,\theta}=\bigoplus_{\alpha\in E}v^{\alpha}_{\theta(\alpha)} = \sum_{\alpha\in E}\sum_{i,j=1}^{n_{\alpha}}\left [v^{\alpha}_{\theta(\alpha)}\right ]_{ij}\otimes e^{\alpha}_{ij}\\
    &=\sum_{\alpha\in E}\sum_{i,j=1}^{n_{\alpha}}(Q_{\alpha})_{ii}^{\frac{\theta(\alpha)}{2}}(Q_{\alpha})_{jj}^{-\frac{\theta(\alpha)}{2}}(u^{\alpha}_{ij})^*\otimes e^{\alpha}_{ij} \in L^{\infty}(\g)\overline{\otimes} B(H_E)
\end{align}
satisfies $\displaystyle \norm{v_{E,\theta}}_{L^{\infty}(\g)\overline{\otimes} B(H_E)}\leq C$ by Proposition \ref{prop30}. The following theorem establishes a useful sufficient and necessary condition for the representation $v_{E,\theta}$ to be unitarizable.

\begin{theorem}\label{thm2}
Let $\g$ be a compact quantum group and $E$ be a subset of $\operatorname{Irr}(\g)$. Suppose that a function $\theta:E\rightarrow [0,1]$ satisfies $\displaystyle \sup_{\alpha\in E}n_{\alpha}^{1-\theta(\alpha)}<\infty$. Then the non-degenerate representation $v_{E,\theta}$ in \eqref{eq02} is unitarizable if and only if $\displaystyle \sup_{\alpha\in E}\left ( \norm{Q_{\alpha}}_{op}\norm{Q_{\alpha}^{-1}}_{op} \right )^{\frac{1-\theta(\alpha)}{2}}<\infty$.
\end{theorem}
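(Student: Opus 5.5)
The idea is to compare $v_{E,\theta}$ with the unitary representation $\bigoplus_{\alpha\in E}v^{\alpha}_1$. It is conjugate to $v_{E,\theta}$ blockwise by the positive operators
\begin{equation*}
S_{\alpha}=Q_{\alpha}^{\frac{1-\theta(\alpha)}{2}},
\end{equation*}
since $(1\otimes S_{\alpha})v^{\alpha}_{\theta(\alpha)}(1\otimes S_{\alpha}^{-1})=v^{\alpha}_1$. Because the $Q_{\alpha}$ commute with their own powers,
\begin{equation*}
\norm{S_{\alpha}}\norm{S_{\alpha}^{-1}}=\left(\norm{Q_{\alpha}}_{op}\norm{Q_{\alpha}^{-1}}_{op}\right)^{\frac{1-\theta(\alpha)}{2}}.
\end{equation*}
So the theorem amounts to this: $v_{E,\theta}$ is unitarizable exactly when the blockwise conjugations can be chosen with uniformly bounded condition numbers.

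\emph{Sufficiency.} Rescale each block: put $T_{\alpha}=\norm{S_{\alpha}^{-1}}\,S_{\alpha}$. Then $\norm{T_{\alpha}^{-1}}=1$ and $\norm{T_{\alpha}}=\norm{S_{\alpha}}\norm{S_{\alpha}^{-1}}$, which is uniformly bounded by hypothesis. Scalars do not affect conjugation, so $T=\bigoplus_{\alpha\in E}T_{\alpha}$ is a bounded invertible operator on $H_E$ with $T^{-1}=\bigoplus_{\alpha}T_{\alpha}^{-1}$. Moreover
\begin{equation*}
(1\otimes T)v_{E,\theta}(1\otimes T^{-1})=\bigoplus_{\alpha\in E}v^{\alpha}_1,
\end{equation*}
which is unitary as a direct sum of unitaries.

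\emph{Necessity.} Suppose $T\in B(H_E)$ is invertible and $w=(1\otimes T)v_{E,\theta}(1\otimes T^{-1})$ is unitary. Fix $\alpha\in E$ and let $\iota_{\alpha}:H_{\alpha}\hookrightarrow H_E$ be the inclusion. Set
\begin{equation*}
X_{\alpha}=T\iota_{\alpha}S_{\alpha}^{-1}:H_{\alpha}\to H_E .
\end{equation*}
Then $X_{\alpha}\in\operatorname{Mor}(v^{\alpha}_1,w)$, an intertwiner between two unitary representations. Taking adjoints in $(1\otimes X_{\alpha})v^{\alpha}_1=w(1\otimes X_{\alpha})$ and using unitarity gives $(1\otimes X_{\alpha}^*)w=v^{\alpha}_1(1\otimes X_{\alpha}^*)$, so $X_{\alpha}^*X_{\alpha}\in\operatorname{Mor}(v^{\alpha}_1)$. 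Now $v^{\alpha}_1$ is unitarily equivalent to the irreducible $u^{\overline{\alpha}}$, so $X_{\alpha}^*X_{\alpha}=c_{\alpha}\operatorname{Id}$ with $c_{\alpha}>0$. Hence, for every $\xi\in H_{\alpha}$,
\begin{equation*}
\norm{T\iota_{\alpha}\xi}=\sqrt{c_{\alpha}}\,\norm{S_{\alpha}\xi}.
\end{equation*}
Taking the supremum over unit vectors gives $\norm{T}\geq\sqrt{c_{\alpha}}\norm{S_{\alpha}}$. Taking the infimum and using $\norm{T\eta}\ge\norm{T^{-1}}^{-1}\norm{\eta}$ gives $\sqrt{c_{\alpha}}\norm{S_{\alpha}^{-1}}^{-1}\geq\norm{T^{-1}}^{-1}$. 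Multiplying the two inequalities eliminates $c_{\alpha}$:
\begin{equation*}
\norm{S_{\alpha}}\norm{S_{\alpha}^{-1}}\leq\norm{T}\norm{T^{-1}}\quad\text{for all }\alpha\in E,
\end{equation*}
which is the required uniform bound.

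The only delicate point is the Schur-type step: that $X_{\alpha}^*X_{\alpha}$ is a scalar. This needs intertwiners between unitary representations to be closed under adjoints, which the adjoint computation above supplies, together with the irreducibility of $v^{\alpha}_1$. Everything else is norm bookkeeping. The hypothesis $\sup_{\alpha} n_{\alpha}^{1-\theta(\alpha)}<\infty$ is used only to guarantee, via Proposition \ref{prop30}, that $v_{E,\theta}$ is a bounded non-degenerate representation in the first place.
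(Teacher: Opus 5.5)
Your proof is correct. The necessity direction takes a genuinely different route from the paper's.

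\textbf{Necessity.} The paper first upgrades the similarity to a unitary equivalence. Since $w$ is unitary and its irreducible components $\overline{\alpha}$, $\alpha\in E$, each occur with multiplicity one, it picks a unitary $U$ with $(1\otimes UT)v_{E,\theta}(1\otimes (UT)^{-1})=\bigoplus_{\alpha\in E}v^{\alpha}_1$ exactly. It then uses linear independence of the coefficients $(u^{\alpha}_{ij})^*$ to force $UT$ to be block diagonal, with blocks $x(\alpha)Q_{\alpha}^{\frac{1-\theta(\alpha)}{2}}$. The norms $\norm{T}$ and $\norm{T^{-1}}$ are read off from this form.

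You instead apply Schur's lemma block by block to $X_{\alpha}=T\iota_{\alpha}S_{\alpha}^{-1}\in\operatorname{Mor}(v^{\alpha}_1,w)$. This shows $X_{\alpha}$ is a scalar multiple of an isometry and gives $\norm{S_{\alpha}}\norm{S_{\alpha}^{-1}}\leq\norm{T}\norm{T^{-1}}$ directly.

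The paper's route yields more information: a complete description of every unitarizing operator, up to a left unitary factor and blockwise scalars. Yours is more economical. It needs only irreducibility of each $v^{\alpha}_1$ and unitarity of $w$. It avoids multiplicity-freeness, the polar-decomposition step implicit in choosing $U$, and extracting coefficients from an infinite sum. It would therefore apply verbatim to direct sums with repeated blocks.

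\textbf{Sufficiency.} Your rescaling $T_{\alpha}=\norm{S_{\alpha}^{-1}}S_{\alpha}$ sidesteps a fact the paper needs. The paper uses $\norm{Q_{\alpha}}_{op}\geq 1$ and $\norm{Q_{\alpha}^{-1}}_{op}\geq 1$, a consequence of $\operatorname{Tr}(Q_{\alpha})=\operatorname{Tr}(Q_{\alpha}^{-1})$. That is what shows $\bigoplus_{\alpha\in E}Q_{\alpha}^{\frac{1-\theta(\alpha)}{2}}$ itself is bounded and invertible.
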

\begin{proof}
We first suppose that $\displaystyle \sup_{\alpha\in E}\left ( \norm{Q_{\alpha}}_{op}\norm{Q_{\alpha}^{-1}}_{op} \right )^{\frac{1-\theta(\alpha)}{2}}<\infty$.  Since $\norm{Q_{\alpha}}_{op}\geq 1$ and $\norm{Q_{\alpha}^{-1}}_{op}\geq 1$, the assumption implies $\displaystyle \sup_{\alpha\in E} \norm{Q_{\alpha}}_{op}^{\frac{1-\theta(\alpha)}{2}} <\infty$ and $\displaystyle \sup_{\alpha\in E} \norm{Q_{\alpha}^{-1}}_{op}^{\frac{1-\theta(\alpha)}{2}} <\infty$. Thus, $\displaystyle T = \bigoplus_{\alpha\in E} Q_{\alpha}^{\frac{1-\theta(\alpha)}{2}}$ is bounded and invertible in $B(H_E)$. Moreover,
\begin{align}
(1\otimes T)v_{E,\theta}(1\otimes T^{-1})&=(1\otimes T)\left(\bigoplus_{\alpha\in E}(1\otimes Q_{\alpha}^{\frac{\theta(\alpha)}{2}})(u^\alpha)^c(1\otimes Q_{\alpha}^{-\frac{\theta(\alpha)}{2}})
\right)(1\otimes T^{-1}) \\
&= \bigoplus_{\alpha\in E}(1\otimes Q_{\alpha}^{\frac12})(u^\alpha)^c(1\otimes Q_{\alpha}^{-\frac12})
\end{align}
is unitary, and hence $v_{E,\theta}$ is unitarizable.

It remains to prove the converse. Suppose that $v_{E,\theta}$ is unitarizable. Then there exists an invertible operator $T\in B(H_E)$ such that $(1\otimes T)v_{E,\theta}(1\otimes T^{-1})$ is unitary. We will show that this implies
\begin{equation}
\sup_{\alpha\in E}\left(\norm{Q_{\alpha}}_{op}\norm{Q_{\alpha}^{-1}}_{op}\right)^{\frac{1-\theta(\alpha)}{2}}\leq \norm{T}_{B(H_E)}\norm{T^{-1}}_{B(H_E)}<\infty.
\end{equation}

Since $(1\otimes T)v_{E,\theta}(1\otimes T^{-1})$ is a unitary representation whose  irreducible decomposition is identified with $\left\{\overline{\alpha}: \alpha\in E\right\}$, each with multiplicity $1$, there exists a unitary operator $U\in B(H_E)$ such that
\begin{align}
(1\otimes UT)v_{E,\theta}(1\otimes T^{-1}U^{-1})&=(1\otimes U)(1\otimes T)v_{E,\theta}(1\otimes T^{-1})(1\otimes U^{-1})\\
&=\bigoplus_{\alpha\in E}(1\otimes Q_{\alpha}^{\frac{1}{2}})(u^{\alpha})^c (1\otimes Q_{\alpha}^{-\frac{1}{2}}).
\end{align}
In other words, we have
\begin{align}
&\sum_{\alpha\in E}\sum_{i,j=1}^{n_{\alpha}}(Q_{\alpha})_{ii}^{\frac{\theta(\alpha)}{2}}(Q_{\alpha})_{jj}^{-\frac{\theta(\alpha)}{2}}(u^{\alpha}_{ij})^*\otimes UTe^{\alpha}_{ij}T^{-1}U^{-1}\\
&=\sum_{\alpha\in E}\sum_{i,j=1}^{n_{\alpha}}(Q_{\alpha})_{ii}^{\frac{1}{2}}(Q_{\alpha})_{jj}^{-\frac{1}{2}}(u^{\alpha}_{ij})^*\otimes e^{\alpha}_{ij}.
\end{align}
Thanks to the linear independence of the matrix elements $(u^{\alpha}_{ij})^*$, we obtain
\begin{align}
    UTe^{\alpha}_{ij}T^{-1}U^{-1}= (Q_{\alpha})_{ii}^{\frac{1-\theta(\alpha)}{2}}(Q_{\alpha})_{jj}^{\frac{\theta(\alpha)-1}{2}}e^{\alpha}_{ij}.
\end{align}
This implies $\displaystyle UT=\bigoplus_{\alpha\in E}(UT)(\alpha)\in B(H_E)$ and there exist scalars $x(\alpha)\in \Comp\setminus \left\{0\right\}$ such that
\begin{align}
Q_{\alpha}^{\frac{\theta(\alpha)-1}{2}}(UT)(\alpha)=x(\alpha)\operatorname{Id}_{\alpha}
\end{align}
for all $\alpha\in E$. Thus, we obtain
\begin{align}
    &\norm{T}_{B(H_E)}=\norm{UT}_{B(H_E)}=\sup_{\alpha\in E }|x(\alpha)|\cdot \norm{Q_{\alpha}}_{op}^{\frac{1-\theta(\alpha)}{2}},\\
    &\norm{T^{-1}}_{B(H_E)}=\norm{(UT)^{-1}}_{B(H_E)}=\sup_{\alpha\in E }|x(\alpha)^{-1}|\cdot \norm{Q_{\alpha}^{-1}}_{op}^{\frac{1-\theta(\alpha)}{2}}.
\end{align}
Thus, we can conclude that
\begin{align}
    \sup_{\alpha\in E}\left ( \norm{Q_{\alpha}}_{op}\norm{Q_{\alpha}^{-1}}_{op} \right )^{\frac{1-\theta(\alpha)}{2}}&\leq \left ( \sup_{\alpha\in E}|x(\alpha)|\cdot \norm{Q_{\alpha}}_{op}^{\frac{1-\theta(\alpha)}{2}} \right ) \cdot \left ( \sup_{\alpha\in E}|x(\alpha)^{-1}|\cdot \norm{Q_{\alpha}^{-1}}_{op}^{\frac{1-\theta(\alpha)}{2}} \right )\\
    &\leq \norm{T}_{B(H_E)}\norm{T^{-1}}_{B(H_E)}.
\end{align}
\end{proof}
The above Theorem \ref{thm2} immediately yields the following criterion.
\begin{corollary}\label{cor30}
Let $\g$ be a non-Kac compact quantum group and suppose that there exists a subset $E\subseteq\operatorname{Irr}(\g)$ such that
\begin{equation}\label{eq301}
    \sup_{\alpha\in E}\frac{\log\left ( \norm{Q_{\alpha}}_{op}\norm{Q_{\alpha}^{-1}}_{op} \right )}{\log(1+n_{\alpha})}=\infty.
\end{equation}
In this case, for any $C\in (1,2)$, let us define $\theta:E\rightarrow [0,1]$ by $\displaystyle \theta(\alpha)=1-\frac{\log(C)}{\log(1+n_{\alpha})}$. Then the non-degenerate representation $v_{E,\theta}$ in \eqref{eq02} satisfies $\norm{v_{E,\theta}}_{L^{\infty}(\g)\overline{\otimes} B(H_E)}\leq C$ and is not unitarizable.
\end{corollary}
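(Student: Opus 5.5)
We apply Theorem \ref{thm2} with the given choice of $\theta$, checking its hypothesis and then showing that the unitarizability criterion fails. The first thing to verify is that $\theta$ really maps $E$ into $[0,1]$. Since $n_{\alpha}\geq 1$, we have $\log(1+n_{\alpha})\geq \log 2>\log C>0$ for $C\in(1,2)$. Hence
\begin{equation*}
0<\frac{\log C}{\log(1+n_{\alpha})}<1,
\end{equation*}
so $\theta(\alpha)\in(0,1)$.

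Next comes the norm bound. By construction,
\begin{equation*}
n_{\alpha}^{1-\theta(\alpha)}=\exp\left(\frac{\log C\,\log n_{\alpha}}{\log(1+n_{\alpha})}\right)\leq \exp(\log C)=C,
\end{equation*}
because $\log n_{\alpha}\leq \log(1+n_{\alpha})$. So $\sup_{\alpha\in E} n_{\alpha}^{1-\theta(\alpha)}\leq C<\infty$. Proposition \ref{prop30}, together with the direct-sum remark preceding Theorem \ref{thm2}, then gives $\norm{v_{E,\theta}}\leq C$. The same bound is exactly the hypothesis needed to invoke Theorem \ref{thm2}. Non-degeneracy of $v_{E,\theta}$ also follows from that remark: each finite-dimensional block $(u^{\alpha})^c$ is non-degenerate, since its matrix coefficients $(u^\alpha_{ij})^*$ are linearly independent and so the slices $(\omega\otimes\id)(v^\alpha_{\theta(\alpha)})$ span $M_{n_\alpha}$.

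The main step is to show that the unitarizability criterion fails. Write $K_{\alpha}=\norm{Q_{\alpha}}_{op}\norm{Q_{\alpha}^{-1}}_{op}\geq 1$. Then
\begin{equation*}
\log\left(K_{\alpha}^{\frac{1-\theta(\alpha)}{2}}\right)=\frac{\log C}{2}\cdot\frac{\log K_{\alpha}}{\log(1+n_{\alpha})}.
\end{equation*}
Since $\log C>0$ and the supremum in \eqref{eq301} is infinite, the supremum over $\alpha\in E$ of the right-hand side is infinite. Hence
\begin{equation*}
\sup_{\alpha\in E}K_{\alpha}^{\frac{1-\theta(\alpha)}{2}}=\infty,
\end{equation*}
and Theorem \ref{thm2} shows that $v_{E,\theta}$ is not unitarizable. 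This identity is the only point where the specific form of $\theta$ matters, and it converts hypothesis \eqref{eq301} directly into the failure of the condition in Theorem \ref{thm2}. I do not expect any genuine obstacle: the only care needed is the range check $\theta(\alpha)\in[0,1]$, which uses $C<2$, and the observation that the infinite supremum forces $E$ to be infinite, so no finite-set degeneracy arises.
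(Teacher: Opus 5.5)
Your proposal is correct and follows the paper's proof: the norm bound comes from Proposition \ref{prop30} via $n_\alpha^{\log C/\log(1+n_\alpha)}\leq C$, and non-unitarizability comes from Theorem \ref{thm2} via the identity $\log\bigl(K_\alpha^{(1-\theta(\alpha))/2}\bigr)=\frac{\log C}{2}\cdot\frac{\log K_\alpha}{\log(1+n_\alpha)}$. Your explicit check that $\theta(\alpha)\in(0,1)$ (using $C<2$) and your use of the exact exponent $\frac{1-\theta(\alpha)}{2}$ from Theorem \ref{thm2} are harmless refinements; the paper's proof writes the exponent as $1-\theta(\alpha)$, which changes nothing.
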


\begin{proof}
First, by Proposition \ref{prop30}, the choice of $\theta$ shows that
\begin{align}
   \norm{v_{E,\theta}}_{L^{\infty}(\g)\overline{\otimes}B(H_E)}\leq \sup_{\alpha \in E}n_{\alpha}^{1-\theta(\alpha)}= \sup_{\alpha \in E} n_{\alpha}^{\frac{\log(C)}{\log(1+n_{\alpha})}}\leq C
\end{align}
In addition, we also have
    \begin{align}
    \log \left ( \left [ \norm{Q_{\alpha}}_{op}\norm{Q_{\alpha}^{-1}}_{op}\right ]^{1-\theta(\alpha)} \right )&=\left (1-\theta(\alpha)\right )\cdot \log \left ( \norm{Q_{\alpha}}_{op}\norm{Q_{\alpha}^{-1}}_{op} \right )\\
    &=\log(C)\cdot \frac{\log \left ( \norm{Q_{\alpha}}_{op}\norm{Q_{\alpha}^{-1}}_{op} \right )}{\log(1+n_{\alpha})}.
    \end{align}
Then the given assumption \eqref{eq301} implies
\begin{align}
\sup_{\alpha\in E}\left ( \norm{Q_{\alpha}}_{op}\norm{Q_{\alpha}^{-1}}_{op}\right )^{1-\theta(\alpha)}=\infty,
\end{align}
so we can conclude that $v_{E,\theta}$ is not unitarizable by Theorem \ref{thm2}.
\end{proof}

\begin{example}[Compact quantum groups with sub-exponential dual growth]
Suppose that $\g$ is a non-Kac compact quantum group and $\widehat{\g}$ has sub-exponential growth. Choose $\beta\in\operatorname{Irr}(\g)$ such that $\norm{Q_{\beta}}_{op}>1$. For any $k\geq 1$, the $k$-th power tensor representation has an irreducible decomposition
\begin{equation}
    \left(u^{\beta}\right)^{\otimes k}\cong\bigoplus_{\alpha\in\operatorname{Irr}(\g)}m_{\alpha,k}u^{\alpha}.
\end{equation}
In addition, by Proposition 6.1 and the discussion following Proposition 6.2 of \cite{KrSo18}, there exists an irreducible component $\alpha_k$ of $\left(u^{\beta}\right)^{\otimes k}$ such that $\norm{Q_{\alpha_k}}_{op}=\norm{Q_{\beta}}_{op}^{k}$.

Since the sub-exponential growth assumption implies $\displaystyle \lim_{k\rightarrow \infty}\frac{\log(1+n_{\alpha_k})}{k}=0$, we obtain
\begin{align}
    &\lim_{k\rightarrow \infty}\frac{\log\left(\norm{Q_{\alpha_k}}_{op}\norm{Q_{\alpha_k}^{-1}}_{op}\right)}{ \log(1+n_{\alpha_k})}\geq \lim_{k\rightarrow \infty}\frac{k\log\left(\norm{Q_{\beta}}_{op}\right)}{\log(1+n_{\alpha_k})} = \infty.
\end{align}
This implies that the subset $E=\left\{\alpha_k:k\geq 1\right\}$ satisfies \eqref{eq301}. Thus, by Corollary \ref{cor30}, we obtain an explicit version of the non-unitarizability conclusion of \cite[Theorem 3.1]{BrYo19} with non-unitarizable non-degenerate representations with norms arbitrarily close to $1$.    
\end{example}

\begin{example}[The free unitary quantum groups $U_F^+$ with $F\in \operatorname{GL}_2(\mathbb{C})$]
Let $\g=U_F^+$ be a non-Kac free unitary quantum group with $F\in\operatorname{GL}_2(\mathbb{C})$. The irreducible unitary
representations of $U_F^+$ are indexed by the free monoid $\n_0*\n_0$ generated by two letters $a$ and $b$ satisfying $\overline{a}=b$ and $\overline{b}=a$. Let $w_k$ be the alternating word of length $k$ starting with $a$:
\begin{equation}
    w_1=a,    \qquad w_2=ab, \qquad w_3=aba, \qquad \cdots.
\end{equation}
Then $n_{w_k}=k+1$ and $\norm{Q_{w_k}}_{op}=\norm{Q_{w_k}^{-1}}_{op}=\norm{Q_{w_1}}^k$, and hence
\begin{equation}
    \lim_{k\rightarrow \infty}\frac{ \log\left(\norm{Q_{w_k}}_{op}\norm{Q_{w_k}^{-1}}_{op}\right)}{\log(1+n_{w_k})}=\lim_{k\rightarrow \infty}\frac{2k\log(\norm{Q_{w_1}})}{\log(k+2)}=\infty.
\end{equation}
Thus, the proper subset $E=\left\{w_k:k\geq 1\right\} \subseteq\operatorname{Irr}(U_F^+)$ satisfies \eqref{eq301}. Note that this example is not covered by the sub-exponential growth assumption.    
\end{example}

\subsection{Non-unitarizable non-degenerate representations of discrete quantum groups}

We begin by recalling a lifting principle from the classical setting. Let $G$ and $H$ be locally compact groups and suppose that $H$ is a quotient of $G$, i.e. there is a continuous surjective homomorphism $q:G\longrightarrow H$. If $\pi:H\longrightarrow B(K)$ is a uniformly bounded representation, then
\begin{equation}
\widetilde{\pi}=\pi\circ q:G\longrightarrow B(K)    
\end{equation}
is a uniformly bounded representation satisfying $\|\widetilde{\pi}\|=\|\pi\|$. Moreover, $\widetilde{\pi}$ is
unitarizable if and only if $\pi$ is unitarizable. Thus, non-unitarizable representations of $H$ can be lifted along quotient group homomorphisms without changing their norms.

Within the framework of locally compact quantum groups, $H$ is a quotient of $G$ if and only if $\widehat{H}$ is a closed quantum subgroup of $\widehat{G}$ \cite{Vae05,DaKaSkSo12}. For general locally compact quantum groups, we say $\widehat{\mathbb H}$ is a closed quantum subgroup of $\widehat{\mathbb G}$ in the sense of Vaes,  if there exists an injective normal unital $*$-homomorphism $\gamma:L^\infty(\mathbb H)\longrightarrow L^\infty(\mathbb G)$ satisfying $\Delta_{\mathbb G}\circ\gamma=(\gamma\otimes\gamma)\circ\Delta_{\mathbb H}$ by \cite[Theorem 3.3]{DaKaSkSo12}. The preadjoint of $\gamma$ is the surjective map $\gamma_*:L^1(\mathbb G)\longrightarrow L^1(\mathbb H)$ given by $\gamma_*(\omega)=\omega\circ\gamma$ for all $\omega\in L^1(\mathbb G)$. The following proposition is therefore a quantum analogue of the lifting principle above.

\begin{proposition}\label{prop430}
Let $\widehat{\mathbb H}$ be a closed quantum subgroup of a locally compact quantum group $\widehat{\mathbb G}$ in the sense of Vaes, and let $\gamma: L^\infty(\mathbb H) \longrightarrow L^\infty(\mathbb G)$ be the associated injective normal unital $*$-homomorphism. If $V\in L^\infty(\mathbb H)\overline{\otimes}B(K)$ is a non-degenerate representation of $\mathbb H$, then $\widetilde V =(\gamma\otimes\id)(V)\in L^\infty(\mathbb G)\overline{\otimes}B(K)$ is a non-degenerate representation of $\mathbb G$ satisfying $\norm{\widetilde V}=\norm{V}$. Moreover, $\widetilde V$ is unitarizable if and only if $V$ is unitarizable.
\end{proposition}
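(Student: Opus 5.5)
Each assertion comes directly from the properties of $\gamma$: it is an injective normal unital $*$-homomorphism intertwining the coproducts, and its preadjoint $\gamma_*$ is surjective.

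\emph{Representation property and norm.} Since $\Delta_{\mathbb G}\circ\gamma=(\gamma\otimes\gamma)\circ\Delta_{\mathbb H}$,
\begin{align}
(\Delta_{\mathbb G}\otimes\id)(\widetilde V)=(\gamma\otimes\gamma\otimes\id)\big((\Delta_{\mathbb H}\otimes\id)(V)\big)=(\gamma\otimes\gamma\otimes\id)(V_{13}V_{23})=\widetilde V_{13}\widetilde V_{23},
\end{align}
so $\widetilde V$ is a representation. The map $\gamma\otimes\id$ is an injective normal $*$-homomorphism between von Neumann algebras, so it is isometric. Hence $\norm{\widetilde V}=\norm{V}$.

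\emph{Non-degeneracy.} For $\omega\in L^1(\mathbb G)$ we have $(\omega\otimes\id)(\widetilde V)=(\gamma_*(\omega)\otimes\id)(V)$. This identity holds on elementary tensors, and both sides are normal in $V$, so it extends. Since $\gamma_*$ is surjective, the sets $\{(\omega\otimes\id)(\widetilde V)\xi\}$ and $\{(\eta\otimes\id)(V)\xi\}$ coincide. Their closed spans are therefore equal, and non-degeneracy transfers in both directions.

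\emph{Unitarizability.} For invertible $T\in B(K)$,
\[
\widetilde V_T=(1\otimes T)\widetilde V(1\otimes T^{-1})=(\gamma\otimes\id)(V_T).
\]
Because $\gamma\otimes\id$ is a unital $*$-homomorphism, $\widetilde V_T^*\widetilde V_T=(\gamma\otimes\id)(V_T^*V_T)$, and similarly for $\widetilde V_T\widetilde V_T^*$. By injectivity, $\widetilde V_T$ is unitary if and only if $V_T$ is unitary. The same operator $T$ therefore unitarizes $V$ exactly when it unitarizes $\widetilde V$.

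\emph{Main point of care.} The only step needing justification is that $\gamma\otimes\id$ is injective, hence isometric, on the von Neumann tensor product $L^\infty(\mathbb H)\overline{\otimes}B(K)$. Injectivity of a normal $*$-homomorphism $\gamma$ passes to $\gamma\otimes\id$. To see this, identify $\gamma$ with an isomorphism onto its range, which is a von Neumann subalgebra. Then $\gamma\otimes\id$ is the tensor product of a normal $*$-isomorphism with the identity, which is again a normal $*$-isomorphism onto $\gamma(L^\infty(\mathbb H))\overline{\otimes}B(K)$.
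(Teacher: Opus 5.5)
Your proof is correct and follows the paper's argument essentially step by step. The four steps match: the coproduct computation, isometry of the injective $*$-homomorphism $\gamma\otimes\id$, the slice identity $(\omega\otimes\id)(\widetilde V)=(\gamma_*(\omega)\otimes\id)(V)$ combined with surjectivity of $\gamma_*$, and the observation that $\widetilde V_T=(\gamma\otimes\id)(V_T)$ is unitary exactly when $V_T$ is. Your extra justifications (injectivity of $\gamma\otimes\id$ on the von Neumann tensor product, and extending the slice identity by normality) are details the paper leaves implicit.
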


\begin{proof}
Since $\gamma$ intertwines the comultiplications,
\begin{align*}
    (\Delta_{\mathbb G}\otimes\id)(\widetilde V)=
    (\gamma\otimes\gamma\otimes\id)\bigl((\Delta_{\mathbb H}\otimes\id)(V)\bigr) =
    (\gamma\otimes\gamma\otimes\id)(V_{13}V_{23}) =\widetilde V_{13}\widetilde V_{23}.
\end{align*}
Thus, $\widetilde V$ is a representation of $\mathbb G$. Since $\gamma\otimes\id$ is an injective $*$-homomorphism, it is
isometric, and hence $\|\widetilde V\|=\|V\|$. Moreover, since $(\omega\otimes\id)(\widetilde V)=(\gamma_*(\omega)\otimes\id)(V)$ for all $\omega\in L^1(\mathbb G)$ and $\gamma_*:L^1(\g)\rightarrow L^1(\mathbb{H})$ is surjective, the non-degeneracy of $V$ implies that $\widetilde V$ is also non-degenerate.

If $V$ is unitarizable, then there exists an invertible operator $T\in B(K)$ such that $V_T=(1\otimes T)V(1\otimes T^{-1})$ is unitary. This implies that $(\gamma\otimes\id)(V_T)=(1\otimes T)\widetilde V(1\otimes T^{-1})$ is also unitary since  $\gamma\otimes\id$ is a unital $*$-homomorphism. Thus, $\widetilde{V}$ is unitarizable. 

Conversely, suppose that there exists an invertible operator $T\in B(K)$ such that $\widetilde V_T=(1\otimes T)\widetilde V(1\otimes T^{-1})$ is unitary. Let $V_T=(1\otimes T)V(1\otimes T^{-1})$. Then, since
\begin{align}
(\gamma\otimes \operatorname{id})(V_T^*V_T)=(\gamma\otimes \operatorname{id})(V_T)^*(\gamma\otimes \operatorname{id})(V_T)=\widetilde{V}_T^*\widetilde{V}_T=1\otimes \operatorname{Id}_K\\
(\gamma\otimes \operatorname{id})(V_TV_T^*)=(\gamma\otimes \operatorname{id})(V_T)(\gamma\otimes \operatorname{id})(V_T)^*=\widetilde{V}_T\widetilde{V}_T^*=1\otimes \operatorname{Id}_K
\end{align}
and $\gamma\otimes\id$ is an injective unital $*$-homomorphism, it follows that $V_T$ is unitary. Hence $V$ is unitarizable.
\end{proof}

We recall that there is another notion of a closed quantum subgroup, due to Woronowicz. For compact or discrete quantum groups, the notions of closed quantum subgroup in the senses of Vaes and Woronowicz coincide \cite{DaKaSkSo12}. Therefore, throughout the compact and discrete settings considered below, we simply use the term closed quantum subgroup, without specifying the notion.

\begin{example}[Direct products]
Let $\mathbb G$ and $\mathbb H$ be locally compact quantum groups. Then $\widehat{\mathbb G}$ and $\widehat{\mathbb H}$ are closed quantum subgroups of $\widehat{\mathbb G}\times \widehat{\mathbb H}=\widehat{\mathbb G\times\mathbb H}$ in the sense of Vaes. Consequently, if either $\mathbb G$ or $\mathbb H$
admits a non-unitarizable non-degenerate representation, then $\mathbb G\times\mathbb H$ admits a non-unitarizable non-degenerate representation with the same norm.
\end{example}

\begin{example}[Free products]\label{ex-free-product}

Let $\mathbb G_1$ and $\mathbb G_2$ be either both compact quantum groups or both discrete quantum groups. Their free product $\mathbb G_1*\mathbb G_2$ is well defined in both settings: in the compact case, we use Wang's free-product construction \cite{Wa95}, while in the discrete case the free product is defined by duality; see \cite{DaFiSkWh16}. With this convention, in either case we have
\begin{equation}\label{eq440}
    \widehat{\mathbb G_1*\mathbb G_2}=\widehat{\mathbb G_1}*\widehat{\mathbb G_2}.
\end{equation}

Each $\mathbb G_i$ is a closed quantum subgroup of $\mathbb G_1*\mathbb G_2$. Hence, by Proposition \ref{prop430}, any non-unitarizable non-degenerate representation of $\widehat{\mathbb G_i}$ lifts to a non-unitarizable non-degenerate representation of $\widehat{\mathbb G_1*\mathbb G_2}=\widehat{\mathbb G_1}*\widehat{\mathbb G_2}$  with the same norm. On the other hand, the above identity \eqref{eq440} shows that each $\widehat{\mathbb G_i}$ is a closed quantum subgroup of $\widehat{\mathbb G_1}*\widehat{\mathbb G_2}=\widehat{\mathbb G_1*\mathbb G_2}$. Thus, by Proposition \ref{prop430}, any non-unitarizable non-degenerate representation of $\mathbb G_i$ lifts to a non-unitarizable non-degenerate representation of $\mathbb G_1*\mathbb G_2$, again with the same norm.
\end{example}

To apply Proposition \ref{prop430} to $\mathbb{F}U_F$ and $\mathbb{F}O_F$, we first present two auxiliary results from classical group theory. We begin with the concept of induction from an open subgroup. Let $G$ be a locally compact group, $H$ be an open subgroup of $G$, and $\pi:H \longrightarrow B(H_\pi)$ be a strongly continuous, uniformly bounded representation. For unitary representations, the standard induction construction from an open subgroup is described, for example, in \cite[Section 2.1]{KaTa13}. 

In the discrete setting, Pisier noted that Mackey induction can lift a uniformly bounded representation of $H$ to a uniformly bounded representation of $G$, while preserving both the norm and unitarizability; see \cite[Proposition 0.5(i)]{Pis05}. In the general setting, let $G$ be a locally compact group, $H$ be an open subgroup of $G$, and let $\pi:H\longrightarrow B(H_\pi)$ be a strongly continuous uniformly bounded representation. Fix a section $s:G/H\longrightarrow G$ satisfying $s(H)=e$, and define $c(g,x)=s(gx)^{-1}gs(x)$ for all $g\in G$ and $x\in G/H$. For each $g\in G$, the map
\begin{equation}
\Pi_s(g)(\delta_x\otimes\xi)=\delta_{gx}\otimes\pi(c(g,x))\xi, \qquad x\in G/H,\quad \xi\in H_\pi,
\end{equation}
extends uniquely to a bounded operator on $\ell^2(G/H)\otimes H_\pi\cong \ell^2(G/H,H_{\pi})$. This defines a strongly continuous uniformly bounded representation $\Pi_s:G\longrightarrow B\left(\ell^2(G/H)\otimes H_\pi\right)$, which is non-unitarizable if $\pi$ is not unitarizable \cite[Proposition 6.3.1]{Ghe17}. We record some additional properties needed for our purposes in the following lemma.
\begin{lemma}\label{openinduction}
The induced representation $\Pi_s$ above satisfies $\norm{\Pi_s}=\norm{\pi}$, and $\Pi_s$ is unitarizable if and only if $\pi$ is unitarizable.
\end{lemma}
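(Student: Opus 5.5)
We prove the two claims in turn. First we compute $\norm{\Pi_s}$ directly from the definition, and then we handle unitarizability in both directions. The basic structural fact is that $\ell^2(G/H)\otimes H_\pi=\bigoplus_{x\in G/H}\delta_x\otimes H_\pi$ is an orthogonal decomposition. Each $\Pi_s(g)$ maps the summand indexed by $x$ to the summand indexed by $gx$, and acts there by $\pi(c(g,x))$. Since $x\mapsto gx$ is a bijection of $G/H$, the operator $\Pi_s(g)$ is a permutation of the summands composed with a block-diagonal operator. Hence
\begin{equation*}
\norm{\Pi_s(g)}=\sup_{x\in G/H}\norm{\pi(c(g,x))}\leq\norm{\pi}.
\end{equation*}
For the reverse inequality we take $g=h\in H$ and $x=H$. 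Then $s(hH)=s(H)=e$, so $c(h,H)=h$, and the restriction of $\Pi_s(h)$ to $\delta_H\otimes H_\pi$ equals $\delta_H\otimes\pi(h)$. This gives $\norm{\Pi_s(h)}\geq\norm{\pi(h)}$, and therefore $\norm{\Pi_s}=\norm{\pi}$.

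For unitarizability, suppose first that $T\pi(\cdot)T^{-1}$ is unitary for some invertible $T\in B(H_\pi)$. We set $\widetilde T=\operatorname{Id}_{\ell^2(G/H)}\otimes T$. Then $\widetilde T\Pi_s(g)\widetilde T^{-1}$ maps $\delta_x\otimes\xi$ to $\delta_{gx}\otimes T\pi(c(g,x))T^{-1}\xi$. This is a permutation of orthogonal summands composed with unitaries between them, so it is unitary, and $\Pi_s$ is unitarizable. Strong continuity is not an issue here, since it is inherited from $\Pi_s$.

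For the converse we use \cite[Proposition 6.3.1]{Ghe17}, which states that $\Pi_s$ is non-unitarizable whenever $\pi$ is. Taking the contrapositive gives the desired implication. If one prefers a self-contained argument, one can restrict to the $H$-invariant subspace. The subspace $K=\delta_H\otimes H_\pi$ is invariant under $\Pi_s(H)$, and the restriction there is $\pi$. Moreover, the map $P(\delta_x\otimes\xi)=\delta_{x,H}\,\delta_H\otimes\xi$ is an $H$-equivariant bounded projection onto $K$, because $H$ permutes the cosets $x\neq H$ among themselves. Now suppose $S\Pi_s(\cdot)S^{-1}$ is unitary. The restriction of a unitarizable uniformly bounded representation to an invariant subspace is again unitarizable. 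To see this, note that the inner product $\la S\eta,S\eta'\ra$ is an equivalent $\Pi_s$-invariant inner product, and its restriction to $K$ is an equivalent $\pi$-invariant inner product. Hence $\pi$ is unitarizable.

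The main obstacle is making the converse airtight, that is, checking that the invariant inner product argument does not silently require complementation or continuity of the section $s$. Because $H$ is open, $G/H$ is discrete, so no measurability issues arise. The restriction argument uses only invariance of $K$ under $H$, not under all of $G$. We therefore expect the equivalent-inner-product restriction argument to settle this step, with \cite{Ghe17} available as a fallback.
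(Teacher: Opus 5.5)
Your proof is correct and follows essentially the same route as the paper. The norm bound comes from the permutation/block structure of $\Pi_s(g)$. The lower bound and the converse both use the $H$-invariant subspace $\delta_H\otimes H_\pi$, on which $\Pi_s|_H$ acts as $\pi$ because $c(h,H)=h$. The forward direction is conjugation by $I\otimes T$. Your invariant-inner-product argument only spells out the step ``the restriction of a unitarizable representation to an invariant subspace is unitarizable'', which the paper leaves implicit. The $H$-equivariant projection $P$ is not needed.
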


\begin{proof}

We first verify the norm equality. For $F=(F(x))_{x\in G/H}\in\ell^2(G/H,H_\pi)$, we have
\begin{equation}
(\Pi_s(g)F)(gx)=\pi(c(g,x))F(x)
\end{equation}
for all $x\in G/H$. Therefore,
\begin{align}
\norm{\Pi_s(g)F}^2=\sum_{x\in G/H}\norm{\pi(c(g,x))F(x)}^2 \leq \norm{\pi}^2\sum_{x\in G/H}\norm{F(x)}^2=\norm{\pi}^2\norm{F}^2.
\end{align}
Thus, we obtain $\norm{\Pi_s}\leq\norm{\pi}$. On the other hand, since $s(H)=e$, we have $c(h,H)=h$ for all $h\in H$, and hence
\begin{equation}\label{eq332}
\Pi_s(h)(\delta_H\otimes\xi)=\delta_H\otimes\pi(h)\xi, \qquad h\in H,\quad \xi\in H_\pi.
\end{equation}
Thus, $\delta_H\otimes H_\pi$ is invariant under $\left.\Pi_s\right|_H$, and the corresponding subrepresentation is unitarily equivalent to $\pi$. Hence, we obtain $\norm{\Pi_s}\geq\norm{\pi}$.

It remains to prove the unitarizability part. Suppose first that $\pi$ is unitarizable. Then there exists an invertible operator $T\in B(H_\pi)$ such that $\pi_T=T^{-1}\pi(\cdot )T$ is a unitary representation of $H$. For $g\in G$, $x\in G/H$, and $\xi\in H_\pi$, we have
\begin{align}
[(I\otimes T)^{-1}\Pi_s(g)(I\otimes T)](\delta_x\otimes\xi)=\delta_{gx}\otimes T^{-1}\pi(c(g,x))T\xi =\delta_{gx}\otimes\pi_T(c(g,x))\xi.
\end{align}
Since $x\mapsto gx$ is a permutation of $G/H$ and $\pi_T(c(g,x))$ is unitary for every $x\in G/H$, for any $F=(F(x))_{x\in G/H}\in\ell^2(G/H,H_\pi)$ we have
\begin{align}
\norm{(I\otimes T)^{-1}\Pi_s(g)(I\otimes T)F}^2=\sum_{x\in G/H}\norm{\pi_T(c(g,x))F(x)}^2=\sum_{x\in G/H}\norm{F(x)}^2=\norm{F}^2.
\end{align}
Thus $(I\otimes T)^{-1}\Pi_s(g)(I\otimes T)$ is an isometry. Since it is invertible, with inverse $(I\otimes T)^{-1}\Pi_s(g^{-1})(I\otimes T)$, it is unitary. Hence $\Pi_s$ is unitarizable.

Conversely, suppose that $\Pi_s$ is unitarizable. Then $\left.\Pi_s\right|_H$ is unitarizable. By \eqref{eq332}, the subspace $\delta_H\otimes H_\pi$ is invariant under $\left.\Pi_s\right|_H$, and the corresponding subrepresentation is unitarily equivalent to $\pi$. Hence $\pi$ is unitarizable.
\end{proof}

By \cite{Wys93}, non-abelian free groups $\mathbb{F}_m=\z^{*m}$, $m\geq 2$, admit non-unitarizable uniformly bounded representations with norm arbitrarily close to $1$. The following lemma shows that the same conclusion holds for $\Gamma_{m,n}=\mathbb{Z}^{*m}*\mathbb{Z}_2^{*n}$ whenever $2m+n\geq 3$.

\begin{lemma}\label{lem330}
Let $m,n\in\mathbb N_0$ such that $2m+n\geq 3$. Then $\Gamma_{m,n}=\mathbb Z^{*m}*\mathbb Z_2^{*n}$ contains a subgroup isomorphic to $\mathbb F_2$. In particular, for any $\epsilon>0$, there exists a non-unitarizable uniformly bounded representation $\pi:\Gamma_{m,n}\longrightarrow B(H_\pi)$ such that $\norm{\pi}<1+\epsilon$.
\end{lemma}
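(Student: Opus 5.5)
The plan has two parts: first locate a copy of $\mathbb F_2$ inside $\Gamma_{m,n}$, then push a representation of $\mathbb F_2$ up to $\Gamma_{m,n}$ by induction, using Lemma \ref{openinduction} to keep control of the norm and of unitarizability.

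\emph{Step 1: a free subgroup.} Since $2m+n\geq 3$, the possible configurations are: $m\geq 2$; $m=1$ and $n\geq 1$; or $m=0$ and $n\geq 3$. In each case it is enough to find $\mathbb F_2$ inside a free factor subgroup, namely $\mathbb Z*\mathbb Z$, $\mathbb Z*\mathbb Z_2$, or $\mathbb Z_2*\mathbb Z_2*\mathbb Z_2$.
\begin{itemize}
\item For $\mathbb Z*\mathbb Z$ there is nothing to do.
\item For $\mathbb Z*\mathbb Z_2=\langle a\rangle*\langle b\rangle$ with $b^2=e$, I would take the kernel of the homomorphism onto $\mathbb Z_2$ sending $a\mapsto 0$ and $b\mapsto 1$. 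This is an index-$2$ subgroup, and by Kurosh (or a direct Reidemeister--Schreier computation) it is generated freely by $a$ and $bab$. So it is isomorphic to $\mathbb F_2$.
\item For $\mathbb Z_2^{*3}=\langle s_1,s_2,s_3\rangle$, the elements $s_1s_2$ and $s_2s_3$ generate a free group of rank $2$. To check this, note that reduced words in $(s_1s_2)^{\pm1}$ and $(s_2s_3)^{\pm1}$ give reduced alternating words in the $s_i$ with no cancellation collapsing to $e$. An alternative is to take the kernel of $\mathbb Z_2^{*3}\to\mathbb Z_2$ sending each $s_i\mapsto 1$; this has index $2$ and is free of rank $2$ by Kurosh/Euler characteristic.
\end{itemize}
The free-product subgroups themselves, such as $\mathbb Z*\mathbb Z_2\le \Gamma_{m,n}$, are subgroups by the normal form theorem.

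\emph{Step 2: transfer the representation.} By \cite{Wys93}, for any $\epsilon>0$ there is a non-unitarizable uniformly bounded $\pi_0:\mathbb F_2\to B(H)$ with $\norm{\pi_0}<1+\epsilon$. Since $\Gamma_{m,n}$ is discrete, every subgroup is open. Lemma \ref{openinduction}, applied with $H=\mathbb F_2\le G=\Gamma_{m,n}$, then produces $\Pi_s$ with $\norm{\Pi_s}=\norm{\pi_0}<1+\epsilon$, and $\Pi_s$ is non-unitarizable because $\pi_0$ is.

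\emph{Main obstacle.} The only point requiring care is verifying freeness in the $\mathbb Z_2^{*3}$ and $\mathbb Z*\mathbb Z_2$ cases. I would handle it by citing the Kurosh subgroup theorem: a finite-index torsion-free subgroup of a free product of cyclic groups is free, and its rank is computed by the Euler characteristic. For the index-$2$ kernels this gives rank $2$ in both cases, since $\chi(\mathbb Z*\mathbb Z_2)=-\tfrac12$ and $\chi(\mathbb Z_2^{*3})=-\tfrac12$, so the index-$2$ subgroup has $\chi=-1$. Torsion-freeness holds because every torsion element is conjugate into a $\mathbb Z_2$ factor, and all such elements map to $1$ and so are not in the kernel.
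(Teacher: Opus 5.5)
Your proposal is correct and follows essentially the paper's proof: the same three-case search for a copy of $\mathbb F_2$, followed by inducing the representation of \cite{Wys93} via Lemma \ref{openinduction}. Your $a,bab$ is the paper's $a,sas$, and your $s_1s_2,s_2s_3$ generate the same index-two subgroup as the paper's $s_1s_2,s_1s_3$ (since $s_1s_3=(s_1s_2)(s_2s_3)$). One small correction: your remark that no cancellation occurs is literally false, because of that very product $(s_1s_2)(s_2s_3)=s_1s_3$. This does not create a gap, since your Kurosh/Euler-characteristic argument for the index-two kernels is sound and already supplies the required free subgroup.
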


\begin{proof}
Let us show that whenever $2m+n \geq 3$, then the group 
\begin{equation}
\Gamma_{m,n}=\z^{*m}*\z_2^{*n}=\left\langle a_1,\ldots,a_m,s_1,\ldots,s_n \,\middle|\, s_1^2=\cdots=s_n^2=e\right\rangle. 
\end{equation}
contains a copy of $\mathbb F_2$.
\begin{itemize}
\item If $m\geq 2$,  then $\mathbb{F}_2\leq \mathbb{F}_m=\mathbb{Z}^{*m} \leq \Gamma_{m,n}.$
\item If $m=1$, then $n\geq 1$, and hence $\Gamma_{1,n}$ contains
\[
\mathbb{Z}*\mathbb{Z}_2
 =\langle a,s\mid s^2=e\rangle.
\]
The subgroup generated by $a$ and $sas$ is free of rank two, and therefore is isomorphic to $\mathbb{F}_2$.
\item If $m=0$, then $n\geq 3$, and $\Gamma_{0,n}$ contains
\[
\mathbb{Z}_2*\mathbb{Z}_2*\mathbb{Z}_2
 =\langle s_1,s_2,s_3\mid s_1^2=s_2^2=s_3^2=e\rangle.
\]
The subgroup generated by $s_1s_2$ and $s_1s_3$ is free of rank two.
\end{itemize}
Hence, the existence of a non-unitarizable uniformly bounded representation whose norm is less than $1+\epsilon$ follows directly from Lemma \ref{openinduction} and \cite{Wys93}.

\end{proof}

We now apply Proposition \ref{prop430}, Lemma \ref{openinduction} and Lemma \ref{lem330} to construct non-unitarizable non-degenerate representations of $\mathbb FU_F$ and $\mathbb FO_F$. This result is new even in the Kac case, namely for $\mathbb F U_N$ and $\mathbb F O_N$.

\begin{theorem}\label{thm-main-3}
Let $\epsilon>0$. The following assertions hold.
\begin{enumerate}
    \item Let $N\geq 2$ and $F\in\operatorname{GL}_N(\Comp)$. Every unitary free quantum group $\mathbb FU_F$ admits an explicit non-unitarizable non-degenerate representation of norm less than $1+\epsilon$.
    \item Let $N\geq 3$ and $F\in\operatorname{GL}_N(\Comp)$ satisfy $F\overline{F}\in\mathbb R\,\operatorname{Id}_N$. Every orthogonal free quantum group $\mathbb FO_F$ admits an explicit non-unitarizable non-degenerate representation of norm less than $1+\epsilon$.
\end{enumerate}
\end{theorem}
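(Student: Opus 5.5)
The strategy is to find, inside the compact dual $\widehat{\mathbb FU_F}=U_F^+$ (resp. $\widehat{\mathbb FO_F}=O_F^+$), a closed quantum subgroup that is a classical compact \emph{abelian} group, namely the dual of a discrete group of the form $\Gamma_{m,n}$ from Lemma \ref{lem330}. Then we lift. Concretely, we will use the description of maximal Kac quantum subgroups (and more specifically the maximal classical abelian/diagonal subgroups) of free quantum groups from \cite{DaFrSk21}. For $U_F^+$, after conjugating $F$ to a normal form, the diagonal matrices $\operatorname{diag}(z_1,\dots,z_N)$ whose entries are \emph{free} unitaries compatible with $F$ give a quotient $C(U_F^+)\to C^*(\Gamma)$. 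Here $\Gamma$ is a free product of copies of $\mathbb Z$ and possibly $\mathbb Z_2$, coming from the relations $\bar u = F u F^{-1}$-type constraints. Equivalently, $\widehat{\Gamma}$ is a closed quantum subgroup of $U_F^+$, so the dual discrete group $\Gamma$ is a "quotient" of $\mathbb FU_F$ in the sense of Proposition \ref{prop430}: there is an injective normal $*$-homomorphism $\gamma:\ell^\infty(\Gamma)\to\ell^\infty(\mathbb FU_F)$ intertwining comultiplications.

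Step 1: identify $\Gamma$. For $\mathbb FU_F$, I expect the quotient $u_{ij}\mapsto \delta_{ij}g_i$ (or a permutation-twisted version when $F$ is not diagonalizable) to yield $\Gamma\supseteq \mathbb Z*\mathbb Z$ for $N\geq 2$. The fundamental representation of $U_F^+$ imposes no relation on a single diagonal unitary beyond those forced by $F$, so one expects $\Gamma_{m,n}$ with $2m+n\geq 3$. For $\mathbb FO_F$ with $F\overline F\in\mathbb R\operatorname{Id}_N$, bring $F$ to the standard form (a direct sum of $2\times 2$ blocks $\begin{pmatrix}0&\lambda\\ \mu&0\end{pmatrix}$ plus possibly a $1\times1$ or identity block). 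The relation $u=F\bar uF^{-1}$ then pairs the diagonal entries as $z_j$, $z_j^{-1}$ on each $2\times 2$ block and forces $z_j=z_j^*$, i.e. $z_j^2=e$, on the identity part. This gives $\Gamma=\Gamma_{m,n}$ with $2m+n=N\geq3$. This is where the hypothesis $N\geq 3$ enters. Note that for $N=2$ one would get $\mathbb Z$ or $\mathbb Z_2*\mathbb Z_2$, which are amenable, as expected since $\mathbb FO_F$ is then amenable.

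Step 2: apply Lemma \ref{lem330} to get a non-unitarizable uniformly bounded $\pi:\Gamma_{m,n}\to B(H_\pi)$ with $\norm{\pi}<1+\epsilon$. In the $\mathbb FU_F$ case, with $N\geq 2$, I expect $m\geq 2$ directly. The representation comes from Wysoczański's representation of $\mathbb F_2$ induced via Lemma \ref{openinduction}, since subgroups of discrete groups are open. Regard $\pi$ as a representation $V=\sum_g\delta_g\otimes\pi(g)\in\ell^\infty(\Gamma)\overline{\otimes}B(H_\pi)$ of the discrete quantum group $\Gamma$, whose dual is the compact group $\widehat{\Gamma}$. Here $\norm{V}=\sup_g\norm{\pi(g)}=\norm{\pi}$, $V$ is non-degenerate because $\pi(e)=I$, and $V$ is unitarizable iff $\pi$ is.

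Step 3: set $\widetilde V=(\gamma\otimes\id)(V)$. By Proposition \ref{prop430}, $\widetilde V$ is a non-degenerate representation of $\mathbb FU_F$ (resp. $\mathbb FO_F$) of norm $\norm{\pi}<1+\epsilon$, and it is non-unitarizable.

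The main obstacle is Step 1: producing the quotient map $C(U_F^+)\to C^*(\Gamma_{m,n})$ (resp. for $O_F^+$) for \emph{arbitrary} $F$, including the bookkeeping of the normal form of $F$. I would first reduce, via the isomorphism classification ($U_F^+\cong U_{F'}^+$ for $F'=vFw$ with unitaries, and similarly for $O_F^+$), to a canonical $F$. I would then verify directly that the diagonal assignment respects the defining relations "$u$ and $F\bar uF^{-1}$ unitary" (resp. $u=F\bar uF^{-1}$), quoting \cite{DaFrSk21} for the identification of the resulting group as the maximal (Kac) abelian quotient.
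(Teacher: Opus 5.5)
Your proposal is correct and has the same structure as the paper's proof. You exhibit $\widehat{\Gamma_{m,n}}$ with $2m+n\geq 3$ as a closed quantum subgroup of $U_F^+$ or $O_F^+$, take the representation from Lemma \ref{lem330}, and lift it with Proposition \ref{prop430}. You even arrive at the same groups: $\mathbb F_N$ for $U_F^+$, $\Gamma_{K,N-2K}$ when $F\overline F=\operatorname{Id}_N$, and $\mathbb F_{N/2}$ when $F\overline F=-\operatorname{Id}_N$.

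The difference is in how the inclusion into $O_F^+$ is justified. The paper quotes the maximal Kac quantum subgroup $U_{m_1}^+*\cdots*U_{m_r}^+*O_{N-2K}^+$ (resp.\ the version with an $O^+_{J_{m_r}}$ factor) from \cite{DaFrSk21}. It then combines $\widehat{\mathbb F_m}\subseteq U_m^+$, $\widehat{\mathbb Z_2^{*n}}\subseteq O_n^+$ and $\widehat{\mathbb F_m}\subseteq O_{J_m}^+$ with Example \ref{ex-free-product}. You instead use only the normal form of $F$ under $F\mapsto vFv^{T}$ (and scaling), and check the universal property directly:
on a block $F_0=\begin{pmatrix}0&\lambda\\ \mu&0\end{pmatrix}$ one has $F_0\,\overline{\operatorname{diag}(a,b)}\,F_0^{-1}=\operatorname{diag}(b^*,a^*)$, which forces $b=a^*$;
on an identity block, $u=F\overline uF^{-1}$ forces $s=s^*$, i.e.\ $s^2=1$.

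Your route is more elementary and self-contained. Maximality plays no role, so you do not actually need \cite{DaFrSk21}. The paper's route buys a conceptual placement of these group duals inside the maximal Kac quantum subgroup. Your explicit passage from $\pi$ to $V=\sum_g\delta_g\otimes\pi(g)$ (norm, non-degeneracy via $\pi(e)=I$, unitarizability) is a detail the paper leaves implicit.

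Three statements need correcting when you write this up.

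\emph{Abelian versus cocommutative.} $\widehat{\Gamma_{m,n}}$ is not a classical compact abelian group. $C^*(\Gamma_{m,n})$ is noncommutative, and the quotient you want is cocommutative. A genuinely commutative quotient would have an abelian, hence amenable, discrete dual and would give nothing.

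\emph{The case $U_F^+$.} There are no constraints of the form $\overline u=FuF^{-1}$, no $\mathbb Z_2$ factors, and no permutation twist. Since $U^+_{yFw}\cong U^+_F$ for unitaries $y,w$, you may take $F=D$ positive diagonal. Then $D\,\overline{\operatorname{diag}(g_1,\ldots,g_N)}\,D^{-1}=\operatorname{diag}(g_1^*,\ldots,g_N^*)$ is unitary, so $\widehat{\mathbb F_N}\subseteq U_F^+$ and $m=N\geq 2$.

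\emph{Allowed changes of $F$ for $O_F^+$.} Only $F\mapsto vFv^{T}$ (and scalars) is allowed, not $F\mapsto vFw$. For example, $\operatorname{Id}_2$ and $J_1$ differ by a unitary factor, yet $O_2^+=SU_{-1}(2)$ and $O^+_{J_1}=SU(2)$ are not isomorphic.
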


\begin{proof}
By Lemma \ref{lem330} and Proposition \ref{prop430}, it is enough to show that their dual compact quantum groups $U_F^+$ and $O_F^+$ contain a closed quantum subgroup of the form $\widehat{\Gamma_{m,n}}$, for some $m,n\in\mathbb N_0$ satisfying $2m+n\geq 3$. We will repeatedly use the following closed quantum subgroup inclusions: $\widehat{\mathbb F_n}\subseteq U_F^+$, $F\in\operatorname{GL}_n(\Comp)$, and $\widehat{\mathbb Z_2^{*n}}\subseteq O_n^+$ as closed quantum subgroups; see \cite{Ban97,DaKaSkSo12} and \cite{BaPa17,SpWe19} for more details. We also note that $\widehat{\mathbb F_n}\subseteq O_{J_n}^+$ where $J_n=\begin{bmatrix}
0&\operatorname{Id}_n\\
-\operatorname{Id}_n&0
\end{bmatrix}$. Indeed, if $\operatorname{diag}(u_1,\cdots,u_{n})$ denotes the fundamental unitary representation of $\widehat{\mathbb{F}_{n}}$, then 
\begin{equation}
U=\operatorname{diag}(u_1,\ldots,u_{n},u_1^*,\ldots,u_{n}^*)    
\end{equation}
is unitary and satisfies the defining relation $U=J_nU^cJ_n^{-1}$ of $O_{J_n}^+$. Thus the universal property of $O_{J_n}^+$ implies $\widehat{\mathbb F_{n}}\subseteq O_{J_n}^+$.

(1) We first consider the unitary free quantum groups. Recall that $\widehat{\mathbb{F}_N}\subseteq U_F^+$. Since $\mathbb{F}_N=\Gamma_{N,0}$ and $2m+n=2N+0=2N\geq 4$, by Lemma \ref{lem330}, for any $\epsilon>0$, there exists a non-unitarizable uniformly bounded non-degenerate representation of $\mathbb F_N$ with norm less than $1+\epsilon$. Therefore, Proposition \ref{prop430} lifts this representation to a non-unitarizable non-degenerate representation of $\mathbb FU_F$ with the same norm.

(2) We now consider the orthogonal free quantum groups. We use the description of the maximal Kac quantum subgroup of $O_F^+$ obtained in \cite[Theorems 3.3 and 3.4]{DaFrSk21}. Up to isomorphism, after the canonical normalization of $F$, there are two cases according as $F\overline F=\operatorname{Id}_N$ or $F\overline F=-\operatorname{Id}_N$.

Suppose first that $F\overline F=\operatorname{Id}_N$. If $F^*F=\operatorname{Id}_N$, then $O_F^+$ is isomorphic to $O_N^+$, which contains $\widehat{\mathbb Z_2^{*N}}=\widehat{\Gamma_{0,N}}$ as a closed quantum subgroup. Hence the desired conclusion follows from Lemma \ref{lem330} and Proposition \ref{prop430}.

Suppose now that $F^*F\neq \operatorname{Id}_N$. Let $0<q_1<\cdots<q_r<1$ be such that the eigenvalues of $F^*F$
different from $1$ are $q_i^2$ and $q_i^{-2}$ ($1\leq i\leq r)$, each occurring with multiplicity $m_i$. Then the maximal Kac quantum subgroup of $O_F^+$ is
\begin{equation}\label{eq307}
    U_{m_1}^+*\cdots*U_{m_r}^+*O_{N-2K}^+,
\end{equation}
where $K=m_1+\cdots+m_r$ and where the last factor is ignored when $N-2K=0$. By Example \ref{ex-free-product}, it follows that $\widehat{\mathbb{F}_{m_1}}*\cdots*\widehat{\mathbb{F}_{m_r}}*\widehat{\mathbb Z_2}^{*(N-2K)}=\widehat{\Gamma_{K,N-2K}}$ is a closed quantum subgroup of $O_F^+$. Since $2K+(N-2K)=N\geq 3$, the desired conclusion follows.

Suppose next that $F\overline F=-\operatorname{Id}_N$. In this case, $N$ is even and $N\geq 4$. Let
\begin{equation}
    0<q_1<\cdots<q_{r-1}<q_r=1
\end{equation}
be such that, for $1\leq i\leq r-1$, $q_i^2$ and $q_i^{-2}$ are eigenvalues of $F^*F$, each occurring with multiplicity $m_i$, while the eigenvalue $1$ occurs with multiplicity $2m_r$. Then $m_1+\cdots+m_r=\frac{N}{2}$. Here, $m_r=0$ if $1$ is not an eigenvalue of $F^*F$. 

If $m_r=0$, then the maximal Kac quantum subgroup of $O_F^+$ is $U_{m_1}^+*\cdots*U_{m_{r-1}}^+$. Since $m_1+\cdots+m_{r-1}=\frac{N}{2}$, Example \ref{ex-free-product} shows that $\widehat{\mathbb{F}_{m_1}}*\cdots *\widehat{\mathbb{F}_{m_{r-1}}}=\widehat{\mathbb F_{N/2}}=\widehat{\Gamma_{N/2,0}}$ is a closed quantum subgroup of $O_F^+$. Hence the desired conclusion follows since $\frac{N}{2}\geq 2$. 

If $m_r\geq 1$, then the maximal Kac quantum subgroup of $O_F^+$ is
\begin{equation}\label{eq308}
    U_{m_1}^+*\cdots*U_{m_{r-1}}^+*O_{J_{m_r}}^+,
\end{equation}
where $J_{m_r}=\begin{bmatrix}0 & \operatorname{Id}_{m_r}\\ -\operatorname{Id}_{m_r} & 0\end{bmatrix}$, and it follows that $\widehat{\mathbb F_{m_1}}*\cdots*\widehat{\mathbb F_{m_r}}=\widehat{\mathbb F_{N/2}}=\widehat{\Gamma_{N/2,0}}$ is a closed quantum subgroup of $O_F^+$ by Example \ref{ex-free-product}. Hence, the desired conclusion follows since $\frac{N}{2} \geq 2$.
\end{proof}

\subsection*{Acknowledgements}
F.K. was supported by the Iran National Science Foundation(INSF) under Project No. 40407519. S.-G.Y. was supported by the National Research Foundation of Korea (NRF) grant funded by the Korea government(MSIT) (No.RS-2025-00561391 and No.RS-2024-00413957). 

\bibliographystyle{myalpha}
\bibliography{Youn}

\end{document}